\documentclass[11pt,fleqn]{article}
\usepackage[english]{babel}
\usepackage{authblk}
\usepackage{a4wide}
\usepackage{xcolor}
\usepackage[utf8]{inputenc}
\usepackage[T1]{fontenc}
\usepackage{needspace}
\usepackage{amsmath}
\usepackage{amssymb}
\usepackage{amsthm}
\usepackage{mathtools}

\usepackage{stmaryrd}
\usepackage{eucal} 
\usepackage{comment}
\usepackage{hyperref}
\usepackage[shortlabels]{enumitem}
\setenumerate{itemsep=0.005ex}

\DeclareMathOperator{\ord}{ord}
\newcommand{\numtwo}{\nu_2}

\allowdisplaybreaks

\addtocounter{footnote}{1}

\theoremstyle{plain}
\newtheorem{theorem}{Theorem}
\newtheorem{lemma}{Lemma}

\newtheorem{corollary}{Corollary}

\newtheorem{algorithm}{Algorithm}
\theoremstyle{definition}
\newtheorem{remark}{Remark}
\newtheorem*{definition}{Definition}

\newcommand{\measure}[1]{\mu\!\left( #1 \right)}

\newcommand{\floor}[1]{\lfloor #1 \rfloor }

\renewcommand{\phi}{\varphi}

\newcommand{\C}{{\mathcal C}({\mathcal S})}

\renewcommand{\S}{{\mathcal S}}

\begin{document}
\title{Normal numbers in  sparse Cantor sets}
\author{Verónica Becher}
\author{Simón Lew Deveali}

\affil{
{\small {\tt vbecher@dc.uba.ar} \qquad
  {\tt sdeveali@dc.uba.ar}}
\\
Facultad de Ciencias Exactas y Naturales, \\
Universidad de Buenos Aires \& CONICET\\
Argentina
}
\maketitle

\begin{abstract}
We  consider Cantor-type sets of Hausdorff dimension zero,
 consisting of all numbers whose base-$2$ expansion can have a $1$ 
only at positions belonging to a given sparse set (local count at least 
 $\log k$ in every interval of length  $k)$. We prove that  the 
measure induced by independent, non-identically distributed Bernoulli digits
assigns full mass to numbers that are normal in every odd base.
The proof extends Schmidt's 1960 method to this Hausdorff zero-dimensional setting,
and we provide an explicit algorithmic construction of such numbers---yielding
the first known examples of numbers deterministic in base~$2$ yet normal in all odd bases.
This work supports our broader conjecture that given determinism in one base, normality in all multiplicatively independent bases is prevalent.
\end{abstract}
\medskip
\medskip

\tableofcontents

\section{Introduction and Statement of Results}

A base is an integer greater than or equal to $2$. 
For a real number $x$, its expansion in  base $b$ is  the sequence of 
digits $b_1(x), b_2(x), \ldots $ such that 
\[
x = \lfloor x\rfloor+ \sum_{j\geq 1} b_j(x) b^{-j}, \qquad b_j(x) \in \{0,\ldots b-1\}.
\]
where we assume the expansion does not end with a tail of $b-1$s.
A real number $x$ is simply normal in  base $b$ if each digit $\{0, \ldots, b-1\}$ occurs with 
asymptotic frequency $1/b$ in the expansion of $x$ in base $b$.
A real number $x$ is normal in base~$b$ if it is simply normal in base~$b^k$,  for every $k\geq 1$. 
For a presentation of normal numbers see~\cite{Bugeaud12,KuiNie74}.

We write  $\log$ for the natural logarithm. For integers $a,c$ we write $[a,c]$ for the set of integers between $a$ and $c$ inclusive.
For a set $\S$ of positive integers  let  $S(a,c)=\#({\mathcal S}\cap[a,c])$.

\begin{definition}[Sparse set]
\label{def:sparse}
A set  ${\mathcal S}$
of positive integers
  is  sparse 
  if it has density zero  and there exists a sparsity exponent $  \rho>0$, such that
\[
\liminf_{k\to\infty} \min_{\substack{0\le a\le k^{\rho}}} \frac{S(a,a+k)}{30\log k}>1.
\]
\end{definition}
The prime numbers, the  triangular numbers, and the squares are all examples of sparse sets.
Also 
$\{\lceil e^{j/100}\rceil: j\geq 1\}$ is sparse.
However, neither the powers of two, nor the factorials are sparse sets.

\begin{definition}[{Sparse Cantor set $\C$}]
For a sparse set ${\mathcal S}$ of positive integers,
the sparse Cantor set $\C$ is the set of real numbers in the unit interval  whose binary digits can be $1$ only at positions belonging to the sparse set ${\mathcal S}$. That is, 
\[\C=\Big\{x\in [0,1): x=\sum_{k\in \S} b_k 2^{-k}\Big\}, \quad (b_k\in\{0,1\})
\]
\end{definition}

We prove the following three results.

\begin{theorem}\label{thm:1}
Let ${\mathcal S}$ be a sparse set of positive integers.
There are uncountably many numbers in $\C$ that are normal in every  odd base. 
\end{theorem}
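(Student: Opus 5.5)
We follow the abstract: put a probability measure $\mu$ on $\C$ and show that $\mu$-almost every $x$ is normal in every odd base $b$. Because $\mu$ will be non-atomic, a set of full $\mu$-measure is uncountable, and Theorem~\ref{thm:1} follows.

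\emph{The measure.} Let $\mu$ be the law of $x=\sum_{k\in\S} b_k 2^{-k}$, where the $b_k$ are independent Bernoulli digits. Each $b_k$ takes the value $1$ with a probability $p_k\in[\delta,1-\delta]$, for some fixed $\delta>0$; the simplest choice is $p_k=1/2$. Since $\S$ is infinite, $\mu$ has no atoms.

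\emph{Reduction to exponential sums.} By Weyl's criterion in the form used by Schmidt, $x$ is normal in base $b$ if $\frac1N\sum_{n<N} e(h b^n x)\to0$ for every nonzero integer $h$. By the Davenport--Erd\H{o}s--LeVeque theorem, it suffices to prove, for each $b$ and $h$,
\[
\sum_{N\ge1}\frac1N\int\Big|\frac1N\sum_{n<N}e(hb^nx)\Big|^2d\mu(x)<\infty .
\]
Expanding the square, the integral is $N^{-2}\sum_{m,n<N}\hat\mu\big(h(b^m-b^n)\big)$. The Fourier transform factorises over the independent digits:
\[
|\hat\mu(t)|=\prod_{k\in\S}\big|1-p_k+p_k e(t2^{-k})\big|\le\prod_{k\in\S}\Big(1-c\,\|t2^{-k}\|^2\Big),
\]
where $\|\cdot\|$ is the distance to the nearest integer and $c>0$ depends only on $\delta$. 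It is therefore enough to show that for $t=h(b^m-b^n)$ with $|m-n|$ large, many positions $k\in\S$ satisfy $\|t2^{-k}\|\ge 1/(2b)$, say.

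\emph{The key combinatorial lemma, extending Schmidt.} This is where oddness of $b$ enters. Since $b$ is odd, $2$ and $b$ are multiplicatively independent and $b^j$ has nontrivial binary digits. Schmidt's lemma says that for an integer $t$ with a large power of $b$ in it, the binary expansion of $t$ is not mostly constant over long stretches. Precisely, in any window of length about $C\log_2 b$ there is a position $k$ where $\|t2^{-k}\|$ is bounded below. We need the quantitative local version: for $t=h b^n(b^{m-n}-1)$, the set of ``good'' $k$ with $\|t2^{-k}\|\ge 1/(2b)$ meets every interval of length $L=L(b,h)$ inside a range $[a,a+K]$, where $K\approx |m-n|\log_2 b$ and $a\approx n\log_2 b+O(\log h)$.

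Schmidt's counting only shows that good positions are a positive proportion of each such interval. That is useless here, because $\S$ has density zero and might miss them all. So the plan is to replace proportion counting by a pigeonhole on consecutive binary positions. If $\|t2^{-k}\|<1/4$ and $\|t2^{-k-1}\|<1/4$ for many consecutive $k$, then the binary digits of $t$ are eventually constant on that block, which forces $t$ to be divisible by, or congruent modulo $2^{\text{len}}$ to, something incompatible with the odd factor $b^{m-n}-1$. The intended outcome is: every run of consecutive ``bad'' positions has length at most about $\log_2$ of the number of ones in that range, plus $O(1)$. The hardest step is then to show that $\S$, with local count at least $30\log k$ per window of length $k$, hits at least $c'\log N$ good positions in the relevant window. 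The constant $30$ is presumably tuned here: $\S$ must have so many elements per window that it cannot be hidden inside the bad runs. The sparsity exponent $\rho$ guarantees this for windows starting below $k^\rho$, which covers $n\lesssim N$.

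\emph{Summation.} The lemma gives $|\hat\mu(h(b^m-b^n))|\le(1-c/(4b^2))^{c'\log|m-n|}\le |m-n|^{-\eta}$ for some $\eta>0$, whenever $|m-n|\ge M_0$. Hence the integral is $O(N^{-\eta'})$ for some $\eta'>0$, the series $\sum_N N^{-1-\eta'}$ converges, and we conclude by Davenport--Erd\H{o}s--LeVeque. There are countably many pairs $(b,h)$, so we can intersect the corresponding full-measure sets. The main obstacle is the local lemma: a polynomial decay exponent requires good positions of $\S$ numbering at least $\log$ of the window length, and this requirement is exactly the definition of sparseness.
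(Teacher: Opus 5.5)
\textbf{There is a genuine gap: your key local lemma is false as stated and cannot be proved.} Your reduction matches the paper's: non-atomic Bernoulli measure, Weyl's criterion, Davenport--Erd\H{o}s--LeVeque, and the product formula for $|\widehat{\mu}|$. But everything then rests on a \emph{pointwise} lemma. It says that for every pair $(m,n)$ with large gap, the binary expansion of $t=h(b^m-b^n)$ has good positions in every interval of bounded length $L(b,h)$, and that $\mathcal S$ meets $\gg\log|m-n|$ of them.

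\begin{itemize}
\item \emph{No arithmetic obstruction to constant blocks.} Fix a gap $g=m-n$ and write $t=2^{a}\ell_g b^{n}$ with $a=\nu_2(h)+\nu_2(b^g-1)$ and $\ell_g$ odd. As $n$ runs over even integers, $\ell_g b^{n}\bmod 2^M$ runs over \emph{all} residues congruent to $\ell_g$ modulo $2^{\nu_2(b^2-1)}$. So some even $n<2^M$ makes every bit of $t$ at positions $a+\nu_2(b^2-1),\dots,a+M-1$ vanish. For that $n$, the product of $|\cos(\pi t/2^{k})|$ over $k\in\mathcal S\cap[a+\nu_2(b^2-1)+2,\,a+M]$ is at least $\prod_{i\ge2}\cos(\pi/2^{i})=2/\pi$. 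Hence your pigeonhole/divisibility step cannot work.
\item \emph{The higher bits are out of reach for individual $n$.} This is your window near $n\log_2 b$. A bounded run length already fails for typical strings, which contain runs of length about $\log_2 K$ in a window of length $K$. Your claim with $n=0$ would also give that $3^m-1$, hence $3^m$, has $\gg m$ nonzero binary digits. The best known lower bound (Stewart, via Baker's method) is only of order $\log m/\log\log m$.
\item \emph{Schmidt's counting is misread.} Schmidt's and Cassels's counting gives a proportion of \emph{exponents} $n$, not of positions for a fixed $t$. That kind of statement is exactly what survives when $\mathcal S$ has density zero.
\item \emph{The window does not fit the hypothesis.} $S(a,a+K)>30\log K$ is only guaranteed for $a\le K^{\rho}$. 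With $a\approx n\log_2 b$ and $K\approx g\log_2 b$, this means $n\lesssim g^{\rho}$, not all $n<N$.
\end{itemize}

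\textbf{The missing idea is to fix the positions and average over $n$, as the paper does.}
\begin{itemize}
\item Gaps $g$ with $\nu_2(b^g-1)>2\log_2\log N$ are rare by Lemma~\ref{lem:valcount} and are bounded trivially.
\item For the other gaps, put $q=\lfloor\log_2 N\rfloor$. As $n<N$ varies, $\ell_g b^n\bmod 2^q$ hits each residue at most $2^{\nu_2(b^2-1)}$ times (Lemma~\ref{lem:residue}).
\item Call $n$ exceptional if fewer than $\lfloor 2.9\log q\rfloor$ of the $\gtrsim15\log q$ non-adjacent positions of $\mathcal S\cap[a+2,a+q]$ carry two differing bits. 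The number of exceptional $n$ is at most a constant times the number of such $q$-bit integers. A binomial tail bound makes that number $<2^q q^{-2}$ (Lemma~\ref{lem:obedient}).
\item Every other $n$ gets a factor at most $2^{-\lfloor 2.9\log q\rfloor/2}\lesssim(\log N)^{-1.005}$.
\end{itemize}
This gives $\int|\frac1N\sum_{n\le N}e^{2\pi i hb^nx}|^2\,d\mu\lesssim(\log N)^{-1.005}$. That is a logarithmic saving, not polynomial. Only the lowest $\approx\log_2 N$ bits are equidistributed as $n$ varies, and $\mathcal S$ has only $\approx30\log\log N$ elements among them. The constant $30$ is tuned exactly so that this exponent exceeds $1$, which is what Davenport--Erd\H{o}s--LeVeque requires. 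Your claimed $N^{-\eta'}$ decay is not reachable by this method.
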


\begin{theorem}\label{thm:2}
Let ${\mathcal S}$ be a sparse set of positive integers.
For any even base $b$, there are uncountably many numbers in $\C$ that are not normal in any even base \(\le b\).
\end{theorem}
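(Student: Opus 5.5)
The plan is to exploit the fact that every even base \(b'\) shares the prime factor \(2\) with base \(2\), and that membership in \(\C\) forces long blocks of zeros in the binary expansion. Since \(\S\) has density zero, for every \(x\in\C\) the binary digits \(b_k(x)\) vanish outside \(\S\), so the proportion of \(1\)s among the first \(N\) binary digits tends to \(0\). Hence no \(x\in\C\) is simply normal in base \(2\). The real work is to pass from base \(2\) to an arbitrary even base \(b'\le b\).

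\textbf{Transfer to even bases.} Here I would use the classical fact that normality in base \(b'\) implies normality in every base multiplicatively dependent with \(b'\). That alone does not suffice, because an even \(b'\) need not be a power of \(2\) (for example \(b'=6\)). So I intend to construct the numbers directly rather than argue for all of \(\C\).

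\textbf{Construction.} Fix the even bases \(b'\in\{2,4,\dots,b\}\) and pick a rapidly increasing sequence of scales \(N_1<N_2<\cdots\). The two ingredients are:
\begin{itemize}
\item at scale \(N_i\), require \(b_k(x)=0\) for all \(k\in\S\cap(N_i,\,M_i]\), where \(M_i\) is much larger than \(N_i\) (say \(M_i\ge i\,N_i\));
\item at all other positions of \(\S\), leave the digit free in \(\{0,1\}\).
\end{itemize}
On the window \((N_i,M_i]\) we then have \(x\equiv y_i \pmod{2^{-M_i}}\), where \(y_i\) is a dyadic rational with denominator \(2^{N_i}\).

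\textbf{Why this destroys normality in base \(b'\).} Write \(b'=2^e m\) with \(m\) odd and \(e\ge1\). The base-\(b'\) expansion of a dyadic rational \(p/2^{N}\) terminates after about \(N/e\) digits, because \((b')^{\lceil N/e\rceil}\) is divisible by \(2^N\). Since \(x\) agrees with \(y_i\) to within \(2^{-M_i}\), the base-\(b'\) digits of \(x\) from position roughly \(N_i/e\) up to roughly \(M_i/\log_2 b'\) coincide with the terminating expansion of \(y_i\), apart from a bounded carry effect. They are therefore all \(0\), or all \(b'-1\) if we approach from below; since \(x\ge y_i\), they are \(0\). With \(M_i/N_i\to\infty\), the frequency of the digit \(0\) along the prefixes of length \(M_i/\log_2 b'\) tends to \(1\). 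So \(x\) is not simply normal in base \(b'\), and this holds simultaneously for every even \(b'\le b\), indeed for every even base.

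\textbf{Uncountability.} We still have to ensure that infinitely many positions of \(\S\) remain free. I would take \(N_{i+1}>M_i\), so that each gap \((M_i,N_{i+1}]\) contains at least one element of \(\S\); this is possible because \(\S\) is infinite, since sparse sets satisfy \(S(a,a+k)\to\infty\). Choosing the free digits independently gives a Cantor set of \(2^{\aleph_0}\) distinct members of \(\C\).

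\textbf{Main obstacle.} The step needing care is the carry and edge effect: showing that \(x-y_i<2^{-M_i}\) forces the base-\(b'\) digits in the range \((\lceil N_i/e\rceil,\ \lfloor M_i\log 2/\log b'\rfloor-1)\) to be \(0\). This follows from \(0\le (b')^{n}x-\lfloor (b')^{n}y_i\rfloor<(b')^{n}2^{-M_i}<1\) for \(n\) in that range, using that \((b')^{n}y_i\) is an integer once \(n\ge N_i/e\).
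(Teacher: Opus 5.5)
Your proof is correct, and it runs on the same mechanism as the paper's, packaged differently. The shared idea is that a dyadic rational has a terminating expansion in every even base. So a long run of binary zeros following a dyadic prefix becomes a long run of zeros in base $b'$. Once the ratio of run length to prefix length tends to $1$, simple normality fails.

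\textbf{How the paper does it.} It works with the explicit numbers $x=\sum_i 2^{-n_i}$, where $n_i\in\S$ and $n_{i-1}/n_i\to 0$. It locates the base-$b$ digits of each term through Bugeaud's identity $2^{-n}=m^n/b^n$, which puts them in positions $\lceil n\log_b 2\rceil,\dots,n$. For a fixed $\alpha<\log_b 2$, this gives at most $n_{i-1}$ nonzero digits among the first $\lfloor \alpha n_i\rfloor$. Uncountability comes from passing to arbitrary subsequences of one fast-growing sequence in $\S$.

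\textbf{How you do it.} You force the zero windows $\S\cap(N_i,M_i]$ with $M_i/N_i\to\infty$ and leave every other position of $\S$ free. You then read off the base-$b'$ digits from two facts: $(b')^n y_i$ is an integer once $en\ge N_i$, and $0\le (b')^n(x-y_i)<1$ once $n<M_i/\log_2 b'$.

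\textbf{What your route gains.}
\begin{itemize}
\item It handles carries cleanly. The paper's term-by-term digit count tacitly ignores carries between the finitely many earlier terms and from the infinite tail. These are harmless, because the partial sum terminates by position $n_{i-1}$ and the tail is below $b^{-\lfloor\alpha n_i\rfloor}$, but the paper does not address them.
\item It produces a Cantor subset of $\C$ directly.
\item It treats every even base $b'=2^e m$ uniformly, with no base-dependent $\alpha$.
\end{itemize}

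\textbf{What the paper's route gains.} It gives fully explicit numbers, computable whenever the chosen sequence is computable, all obtained from a single lemma.

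Both constructions in fact give non-normality in every even base simultaneously, not only in the even bases $\le b$. In the paper's case this holds because $n_{i-1}/n_i\to 0$ lets any $\alpha>0$ be used.
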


A function from non-negative integers to non-negative integers is computable
if it belongs to the smallest class of functions containing the constant functions,  successor, projections and it is closed under composition, primitive recursion, and unbounded minimization.
A set of positive integers is computable  if its characteristic function is a computable function defined for all positive integers. 
A real number is computable if 
there is a computable function that, for each $n$, gives the $n$-th digit of its expansion in some prescribed base.
The computable functions are realized by algorithms.

Since the proof of Theorem~\ref{thm:2} is constructive, 
given any computable set $\S$ with a computable sparsity exponent, 
the proof immediately yields  a distinct computable number for
each  computable increasing sequence of elements in $\S$
with exponential growth. In contrast, the proof of Theorem~\ref{thm:1} is nonconstructive; we therefore provide an explicit algorithm.

\begin{theorem}\label{thm:3}
Let ${\mathcal S}$ be a computable sparse set of positive integers with a computable sparsity exponent.
There is an algorithm  that
given a rational $\varepsilon$, $0<\varepsilon<2/3$, 
produces a  number $x(\varepsilon)$ in $\C$
expressed in base~two, that is normal in all odd bases.
\end{theorem}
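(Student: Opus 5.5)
We will prove Theorem~\ref{thm:3} by turning the measure-theoretic argument behind Theorem~\ref{thm:1} into an effective, finitary construction. We fix the product measure $\mu$ on $\C$ whose binary digits at positions $k\in\S$ are independent Bernoulli variables, the $k$-th with parameter $p_k\in[\varepsilon/2,1-\varepsilon/2]$; digits outside $\S$ are $0$. The parameter $\varepsilon$ enters only through these bounds, keeping every parameter bounded away from $0$ and $1$. For an odd base $b$ and $h\ge1$, the core analytic estimate of the Schmidt-type argument is a decay bound on the Fourier transform
\[
\hat\mu(t)=\prod_{k\in\S}\bigl|1-p_k+p_k e^{2\pi i t 2^{-k}}\bigr|
\]
at frequencies $t=hb^{n}$. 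The sparsity condition supplies at least $30\log k$ elements of $\S$ in each window $[a,a+k]$ with $a\le k^\rho$. Using the fact that $2$ and $b$ are multiplicatively independent, a Schmidt-style counting argument shows that in such windows many of the factors satisfy $\|hb^n2^{-k}\|\ge 1/b$, where $\|\cdot\|$ is the distance to the nearest integer. Each such factor is at most $1-c(\varepsilon,b)$. The outcome is a polynomial bound $|\hat\mu(hb^n)|\le C\,n^{-\delta}$ with explicit constants.

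Feeding this into the Davenport--Erdős--LeVeque variance estimate gives an explicit bound
\[
\int \Bigl|\tfrac1N\sum_{n<N}e^{2\pi i h b^n x}\Bigr|^2\,d\mu(x)\le C' N^{-\eta}.
\]
From this we obtain explicit tail bounds via Chebyshev. We choose a computable sequence $N_j$ growing polynomially and tolerances $\tau_j\to0$. The $\mu$-measure of the set of $x$ that fail $\bigl|\frac1{N_j}\sum_{n<N_j}e^{2\pi i h b^n x}\bigr|<\tau_j$ for some odd $b\le j$ and $1\le h\le j$ is at most $j^2C'N_j^{-\eta}\tau_j^{-2}$. This quantity is summable, and by a computable choice its total is less than $1$; this is where $\varepsilon<2/3$ is used to guarantee the constants fit.

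Each such condition is decided by finitely many binary digits up to a controllable error. The exponential sum at level $N_j$ depends on $x$ up to precision $b^{-N_j}j$, so it is determined by roughly $N_j\log_2 b+O(\log j)$ bits, with an error bounded by a slightly relaxed tolerance.

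The algorithm then builds $x(\varepsilon)$ digit block by digit block as the unique point of a decreasing sequence of cylinders. At stage $j$ we extend the current cylinder $I_{j-1}$, restricted to positions in $\S$, to a longer cylinder $I_j$ with two properties:
\begin{enumerate}[(i)]
\item every condition up to stage $j$ holds on all of $I_j$, up to the precision slack;
\item the conditional measure $\mu(B_{>j}\mid I_j)$ of the union $B_{>j}$ of all later bad sets stays below a fixed bound $<1$.
\end{enumerate}
Since $\mu$ is an explicit product measure with rational parameters and $\S$ is computable, all of these finite quantities are computable, as are the tail bounds. So we can search over finitely many extensions and pick the one minimizing the computable upper bound on the conditional bad measure, in the style of the method of conditional expectations. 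Weyl's criterion along the subsequence $N_j$, which has ratio $N_{j+1}/N_j\to1$, then yields normality in every odd base.

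The main obstacle is step (ii). We must bound the conditional bad measure given a cylinder by a computable quantity whose averaging property survives conditioning. The global Fourier bound does not directly control $\mu(\cdot\mid I_j)$, but the conditional measure is again a product measure on the remaining positions, shifted by a fixed rational. The plan is therefore to re-prove the Fourier decay bound uniformly for all such tail product measures. The decay uses only digits at positions beyond the length of $I_j$, which still form a sparse set with the same local counts. If this works, the conditional variance bounds hold uniformly, and the greedy choice always keeps the bound below $1$.
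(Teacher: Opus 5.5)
Your proposal has two genuine gaps: one in the analytic input, and one in step (ii) of the construction.

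\textbf{The analytic input.} You rely on a pointwise bound $|\hat\mu(hb^n)|\le Cn^{-\delta}$, and hence on a variance bound $C'N^{-\eta}$. A Schmidt-type argument does not give this, and no such pointwise statement is available. It would require controlling the binary digits of each individual $hb^n$ at the prescribed positions of $\mathcal S$. For instance, the lowest $j+2$ bits of $b^{2^j}$ are $0\cdots01$, so any decay there would have to come from its middle and top bits, which nobody controls.

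Schmidt's method works only on average over $0\le n<N$, and only through the residues of $\ell r^n$ modulo $2^q$ with $2^q\le N$ (Lemma~\ref{lem:residue}). So only a window of about $\log_2 N$ bits above the $2$-adic valuation of the frequency is usable. A sparse set puts only about $30\log\log N$ positions in that window. What you actually get is the averaged bound of Lemma~\ref{lem:combined} and the variance bound $2^{2\nu_2(r^2-1)+4}(\log N)^{-1.005}$ of Lemma~\ref{lem:L2-key}, valid only for $|h|\le\log\log N$.

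With that bound your parameters fail. For polynomially growing $N_j$, the available bound on the Chebyshev terms is of order $(\log j)^{-1.005}$, which is not summable. Letting $b$ and $h$ range up to $j$ costs a further factor $j^2$. The paper instead needs three things:
\begin{itemize}
\item $N_m=\lfloor 2^{m^{\alpha}}\rfloor$ with $1/1.005<\alpha<1$, so that $(\log N_m)^{-1.005}\approx m^{-1.004}$ is summable while still $N_m/N_{m-1}\to1$;
\item frequencies restricted to $|h|\le\log\log N_m$, with threshold $1/\log\log N_m$;
\item base-dependent starting indices $m_0(r)$ with budget $\varepsilon 2^{-r}$.
\end{itemize}

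\textbf{Step (ii).} You flag this as the main obstacle yourself, and it is left open. The proposed remedy does not work. To make (i) hold on all of $I_j$, the cylinder must have length $L\approx N_j\log_2 b$. The randomness left in $\mu(\cdot\mid I_j)$ then sits at positions $>L$. That is far above the window of about $\log_2 N_{j+1}$ bits where the counting argument has any grip. Moreover, sparsity only guarantees elements of $\mathcal S$ in windows $[L,L+k]$ with $k\gtrsim L^{1/\rho}$ (Lemma~\ref{lem:counting}).

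The missing idea is that no conditional Fourier estimate is needed. Your instinct to use conditional expectations is right. But the quantity to track is $\mu(\Delta\cap I)$ for the global bad set, and its averaging under dyadic splits is automatic by additivity. The paper runs a computable Sierpi\'nski-type construction built from:
\begin{itemize}
\item the global bad set $\Delta=\bigcup_r\bigcup_{m\ge m_0(r)}\Lambda_{r,m}$, with $\mu(\Delta)<\varepsilon$;
\item its finite approximations $\Delta_p$, which are finite unions of intervals with computable endpoints;
\item the computable global tail $u_p\ge\mu(\Delta\setminus\Delta_p)$, which also bounds the tail inside every interval.
\end{itemize}

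At each sparse position $k_i$, both halves of $I_{i-1}$ have mass exactly $2^{-i}$. So one half satisfies $\mu(\Delta_{p_i}\cap I_i)+u_{p_i}<2^{-i}T_i$, where $T_i=\varepsilon+\sum_{j\le i}2^{j-1}u_{p_j}$. Choosing $u_{p_i}<2^{-2i}\varepsilon$ keeps the accumulated errors below $\varepsilon/2$, so $T_i\le 3\varepsilon/2<1$. This, not any fitting of constants in a Fourier bound, is where $\varepsilon<2/3$ enters.

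Hence $\mu(I_i\setminus\Delta)>0$ for every $i$. Since $\Delta$ is open in $\mathcal C(\mathcal S)$, the limit point lies outside $\Delta$. This makes both your (i) and (ii) unnecessary. The fair measure of Theorem~\ref{thm:1} suffices, so the $\varepsilon$-dependent Bernoulli parameters are not needed.
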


The set $\C$ is generated by a  non-stationary iterated function system consisting of the two similarity maps
\[
f_0(x) = \frac{x}{2}, \qquad f_1(x) = \frac{x}{2} + \frac{1}{2}, \qquad x \in [0,1].
\]
However, unlike a classical self-similar set, the available maps depend on the digit position~$k$: at step $k$, the map $f_1$ is allowed if and only if $k \in \S$; if $k \notin \S$, then only the map $f_0$ can be used. 
Thus, $\C$ 
is not a self-similar set generated by a single fixed IFS applied uniformly at every scale,
 so our theorem does not follow from the sufficient condition for normality given by the geometrical criterion of Hochman and Shmerkin~\cite{HochmanShmerkin2015} for self-similar measures. 
It does not follow either from  the extension done by  Algom, Rodriguez Hertz, and Wang~\cite{ARW} that treat stationary self-conformal measures and  prove, under aperiodicity of the derivative cocycle, that such measures are pointwise  normal in all integer bases and Rajchman. Their result relies  on  the existence of a fixed IFS of conformal maps.
Measures supported on sets of Hausdorff dimension zero are not guaranteed to have a Fourier transform that decays sufficiently fast at infinity to ensure equidistribution.

We consider  Cantor-type sets $\C$ consisting of all numbers whose base-2 expansion can have a 1 only at positions belonging to a sparse set $\S$. Theorem~\ref{thm:1} establishes that the measure on $\C$ induced by independent, non-identically distributed Bernoulli digits---fair on the sparse set and zero outside it---is supported on numbers that are normal in all odd bases.
Theorem~\ref{thm:1} yields a novel case of  normality in a setting where the measure is not self-conformal and the underlying set $\C$ has Hausdorff dimension zero. 
An analogous situation occurs with Liouville numbers: although the set of Liouville numbers has Hausdorff dimension zero, almost all numbers in the support of Bluhm's measure, which consists entirely of Liouville numbers, are normal in every base~\cite{Bugeaud-liouville,liouville}.

The set $\C$ is made of just two digits, being $0$ everywhere except at sparse positions, where there is a choice between $0$ and $1$. This is the smallest amount of randomness for which we could prove that the Fourier transform decays fast enough to yield normality. 
Theorem~\ref{thm:1} asserts normality only to odd bases, keeping the proof simpler than if we had to account for multiplicative independent bases. Applying Schmidt's specialized tools from~\cite{Schmidt1962} it can be extended to all bases multiplicatively independent of $2$.
Besides, the proof of Theorem~\ref{thm:1} generalizes in a routine way to Cantor-type sets constructed from $b$ digits for $b \geq 3$, yielding non-normality to base $b$ and normality to all bases co-prime with $b$.

Theorem~\ref{thm:1} generalizes the singular case of Volkmann's theorem in~\cite{Volkmann1}, where he deals with a fixed  set $\S$ of triangular numbers and considers a Cantor-type set $\C$ with three digits. The numbers in $\C$ have either of two prescribed digits at the triangular positions, while at the non-triangular positions the third digit is fixed. For any base $b \geq 3$, Volkmann proves that uncountably many numbers in $\C$ are normal in all bases multiplicatively independent of~$b$. 
In contrast to Volkmann's work, Theorem~\ref{thm:1} applies not just when $\mathcal{S}$ is the set of triangular numbers, but to any sparse set whose gaps grow at most exponentially (the $n$th element of $\mathcal{S}$ is $O(e^{\rho n})$), with a uniform proof over all such $\mathcal{S}$.

Both, Volkmann's proof~\cite{Volkmann1} and our proof of Theorem~\ref{thm:1}, build on Schmidt's work in~\cite{Schmidt1962}, which itself generalizes Cassels's~\cite{Cassels1959} (see also~\cite{BecherSlaman2014,BBS}). Schmidt gives an explicit construction of uncountably many numbers that are normal in all bases in a set $X$ and not normal in any base in a set $Y$, where $X$ and $Y$ are closed under multiplicative dependence. For each base $b\geq 3$, his construction uses Cantor-type sets with two digits, of Hausdorff dimension $(\log 2)/(\log b)$. In contrast, Volkmann's proof and our proof of Theorem~\ref{thm:1} use Cantor-type sets of Hausdorff dimension zero. To obtain our result uniformly over all sparse sets $\S$, we redevelop Schmidt's~\cite[Hilfssatz~5]{Schmidt1962}, a key lemma that bounds the sum of a Riesz product to  yield the needed exponential sum estimate; Volkmann's proof does not require this.

\pagebreak
Theorem~\ref{thm:2}  proves a result  independent of that of Theorem~\ref{thm:1} concerning the failure of normality in even bases. The proof is constructive.
We do not know how to prove a similar result for lack of normality in $\C$ for odd bases. Topologically, fluctuating behaviour of the frequency of digits or blocks of digits is the most common form of non-normality in the full unit interval: in any given base, the set of real numbers exhibiting it is comeager in the full unit interval~\cite{Olsen,AveniLeonetti}. 
However, in the Cantor-type set $\C$ the usual topological arguments do not apply.

In Theorem~\ref{thm:3}
we give an algorithm that, given a sparse set $\S$ and a positive rational~$\varepsilon< 2/3$, produces a number $x(\varepsilon)$ in $\C$ that is normal in all odd bases. 
This answers a question of Pablo Shmerkin (personal communication, 2014), who asked for numbers whose digit and block frequencies converge to non-uniform limits in at least one base, yet are normal in the other multiplicatively independent bases.
We remark that Volkmann's work~\cite{Volkmann1} does not provide a computable construction in his setting. To our knowledge, Theorem~\ref{thm:3} is the first algorithm that yields  a number that has digit $0$ with asymptotic frequency $1$ in its base-$2$ expansion and is normal in all odd bases. Our algorithm 
can be generalized
to any choice of base in place of $2$ and any prescribed non-uniform asymptotic frequencies for digits or blocks of digits in that base, while preserving normality in all multiplicatively independent bases.

We base our algorithm in the  computable reformulation of Sierpi\'nski's construction~\cite{Sierpinski1917} of an absolutely normal number  done by the first author in~\cite{BF02}. 
We adapt it  in two aspects. First, we define the set of numbers in $\C$ whose initial segment deviates from normality in terms of averages of exponential sums, whereas  in~\cite{BF02} such sets are  unions of sets satisfying a discrete condition (digit occurrence counts). Second, the algorithm is adapted to make choices only at steps~$k$ where~$k$ is in the sparse set $\S$.
The resulting algorithm outputs the base-2  expansion of a real  number $x$ having $0$s at all positions not in $\mathcal{S}$. For the elements of $\mathcal{S}$ taken in increasing order, at the $i$-th element the algorithm demands  double  exponentially in~$i$ many steps to determine whether to output a $0$ or a $1$.
The main obstacle to proving Theorem~\ref{thm:3} with a direct adaptation of Schmidt's construction~\cite{Schmidt1962}---which could possibly yield a polynomial-time algorithm---lies in controlling the multiplicities of the residue classes $\floor{\ell r^n / 2^a} \bmod 2^k$ for fixed positive integers $a,k$, odd $\ell, r$, as $n$ ranges over $0, \dots, 2^k-1$.
\medskip

Regardless of the choice of the sparse set $\mathcal{S}$, all numbers in $\C$ have a base-$2$ expansion in which the digit $1$ occurs with asymptotic density $0$, hence the digit $0$ occurs with asymptotic density $1$. Such numbers, which are of course not normal in base $2$, form a subclass of the deterministic numbers in base $2$.
The notion of deterministic numbers was introduced by Weiss~\cite{Weiss1971} in the setting of dynamical systems: a number is deterministic in base $b$ if the orbit of its base-$b$ expansion under the shift map generates empirical measures (for each $n$, the average of point masses on the first $n$ shifted sequences $x, Tx, \dots, T^{n-1}x$) whose weak-$*$ limits all have zero Kolmogorov--Sinai entropy. Zero entropy means that the statistical description of the orbit never exhibits exponential growth in the number of distinguishable finite patterns.
Deterministic numbers admit an equivalent characterization due to Rauzy~\cite{Rauzy76}: a real number is deterministic in base $b$ if and only if adding it to any base-$b$ normal number $x$ preserves normality---that is, $x+y$ remains normal in base $b$. This builds on a preservation result by Spears and Maxfield~\cite{LSM}, who proved that adding a number whose base-$b$ expansion contains a single digit with asymptotic density $1$ preserves normality. Besides numbers with base-$2$ expansion with asymptotic density of $0$s equal to 1, examples of deterministic numbers in base $2$ include numbers with periodic base-$2$ expansions,   Sturmian numbers, and  numbers whose base-$2$ expansion is a paper-folding sequence.
Theorems~\ref{thm:1} and~\ref{thm:2} yield the following.

\begin{corollary}
There are uncountably many deterministic numbers in base $2$ that are normal in all odd bases; and for any even base $b$, uncountably many that are not normal in all even bases~$\leq b$ simultaneously. In both cases, we give an explicit construction of a countable family of  such numbers.
\end{corollary}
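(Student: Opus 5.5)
The corollary should follow by combining Theorems~\ref{thm:1}, \ref{thm:2} and~\ref{thm:3} with the elementary fact, recalled just before the statement, that every element of $\C$ is deterministic in base~$2$.

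The first step is to check that $\C$ consists entirely of deterministic numbers. Take $x\in\C$. Its binary digit $1$ can appear only at positions in $\S$. Since $\S$ has density zero, the digit $0$ has asymptotic frequency $1$ in the base-$2$ expansion of $x$. Consequently every block of length $k$ other than $0^k$ has frequency $0$, because each occurrence of such a block contains a $1$. So every weak-$*$ limit of the empirical measures along the shift orbit is the point mass at $0^\infty$, and this measure has zero Kolmogorov--Sinai entropy. Hence $x$ is deterministic in Weiss's sense. One could also invoke Rauzy's characterization together with the Spears--Maxfield preservation result, but the entropy argument is direct.

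With this in hand, the existence statements follow immediately. Theorem~\ref{thm:1}, applied to any fixed sparse set, for instance the squares, gives uncountably many elements of $\C$ that are normal in every odd base. Each of them is deterministic in base~$2$. For a given even base $b$, Theorem~\ref{thm:2} gives uncountably many elements of $\C$ that are normal in no even base $\le b$. In particular they are not normal in all of those bases simultaneously, and each of them is again deterministic in base~$2$.

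For the explicit countable families, two sources are used.
\begin{itemize}
\item In the odd case, apply Theorem~\ref{thm:3} to a computable sparse set with a computable sparsity exponent, for example the squares. Running the algorithm on the rationals $\varepsilon\in(0,2/3)$ gives explicit numbers $x(\varepsilon)$ in $\C$, each normal in all odd bases.
\item In the even case, use the remark following Theorem~\ref{thm:2}: its constructive proof produces a distinct computable number for each computable, exponentially growing increasing sequence in $\S$. There are countably infinitely many such sequences, for example those obtained by shifting the starting element.
\end{itemize}

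The only step needing care is that the family $\{x(\varepsilon)\}$ is infinite, that is, that distinct $\varepsilon$ yield distinct numbers. If the algorithm did not already guarantee this, I would modify it: for the $n$-th rational, force an initial segment of prescribed choices at the first $n$ sparse positions, then run the algorithm from that prefix. Normality is a tail property, and the construction of Theorem~\ref{thm:3} is a limit argument insensitive to a fixed finite prefix, so the resulting numbers remain normal in all odd bases while being pairwise distinct. I expect this distinctness issue to be the main, though minor, obstacle; everything else is bookkeeping.
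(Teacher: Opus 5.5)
Your proposal is correct and follows the paper's argument: every element of $\C$ has its binary $1$s on a density-zero set and so is deterministic in base~$2$. Theorems~\ref{thm:1} and~\ref{thm:2} then give the uncountable families, and Theorem~\ref{thm:3} together with the constructive proof of Theorem~\ref{thm:2} supply the explicit countable ones.

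The paper does not address whether distinct $\varepsilon$ give distinct $x(\varepsilon)$, and your prefix-forcing repair does not work as stated. The algorithm is sensitive to the prefix: its invariant needs $\mu(\Delta\cap I)<\mu(I)$ on the forced starting interval $I$, where $\mu(I)=2^{-n}$, so $\varepsilon$ must shrink with the prefix length $n$ (e.g.\ $\varepsilon<2^{-n}$). A simpler route is to flip finitely many bits at sparse positions of a single $x(\varepsilon)$. The result stays in $\C$ and differs from $x(\varepsilon)$ by a dyadic rational, and adding a rational preserves normality in every base.
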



Bernay~\cite{Bernay} proved that the set of deterministic numbers has Hausdorff dimension~0.  We conjecture that,  among the deterministic numbers in base $2$, the normality in  all bases multiplicatively independent of~$2$ is a generic property.


\section{Proof of Theorem~\ref{thm:1}}

\begin{definition}[Measure $\mu$ on $\C$]
Given a sparse set ${\mathcal S}$ of positive integers,
we define a probability measure $\mu=\mu(\S)$
on the unit interval~$[0,1)$ equipped with the Borel $\sigma$-algebra, via the binary expansion of numbers.
The  numbers $x\in  \C $ are of the form $x=\sum_{k\in\S} b_k 2^{-k}$, where, 
for every  $k\ge1$, digit $b_k=0$ with probability  $1/2$ if $k \in \S$, and digit $b_k=0$ with probability $1$ otherwise. 
\end{definition}
So $\mu$ is the image of a product of independent but not identically distributed Bernoulli measures under the binary expansion map.
The measure $\mu$ is  singular and continuous  with respect to Lebesgue measure, and its support is  the set  $\C$.
The Fourier transform of  $\mu$ is
\[
\widehat{\mu}(h) = \prod_{k\in\S} e^{-\pi i h / 2^k} \cos(\pi {h}/{2^k}) 
= 
e^{ -\big(\pi i h \underset{\substack{\text{\tiny $k\!\in\!\S$}}}{\sum} 2^{-k}\big)} 
\prod_{k\in \S} \cos(\pi {h}/{2^k}).
\]
So,
\[
|\widehat{\mu}(h)| \leq  \prod_{k\in \S} \left|\cos(\pi {h}/{2^k})\right|.
\]
We show that for any sparse set $\mathcal{S}$, the measure $\mu$ is a Rajchman measure whose decay is rapid enough to satisfy the Davenport--Erdős--LeVeque theorem~\cite{LeVeque}   for all odd bases. It follows that $\mu$-almost every $x\in \C$ is normal in all odd bases. This is our strategy to prove Theorem~\ref{thm:1}.

\begin{lemma}
\label{lem:counting}
Let \(\mathcal S\) be a sparse set   and let $\rho$ be its sparsity exponent.  Then for all integers \(a\ge 0\) and all integers \(k\ge\delta_1(a)\),
\[
 S(a,a+k) > 30\log k,
\]
where  
\(
\delta_1(a) = \bigl(a^{1/\rho}+1\bigr)K,
\) with \(K\ge 2\) a constant depending only on \(\mathcal S\).
\end{lemma}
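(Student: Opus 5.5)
From the definition of sparse set, the liminf exceeding $1$ gives a threshold $k_0$ such that for every $k\ge k_0$ and every $0\le a\le k^{\rho}$ we have $S(a,a+k)>30\log k$. The plan is to convert the constraint $a\le k^\rho$ into the constraint $k\ge a^{1/\rho}$. Then we choose $K$ large enough to absorb $k_0$, so that the hypothesis $k\ge \delta_1(a)$ forces both $k\ge k_0$ and $a\le k^\rho$.

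Concretely, I will set $K=\max(2,k_0)$, with $k_0$ an integer threshold taken from the liminf and depending only on $\mathcal S$ (and $\rho$, which is part of the data of $\mathcal S$). Let $a\ge 0$ and $k\ge \delta_1(a)=(a^{1/\rho}+1)K$. Since $a^{1/\rho}+1\ge 1$, we get $k\ge K\ge k_0$. Since $K\ge 2\ge 1$, we also get $k\ge a^{1/\rho}$, and as $t\mapsto t^\rho$ is increasing on $[0,\infty)$, this gives $k^\rho\ge a$. Hence $a$ lies in the range $0\le a\le k^\rho$ over which the minimum in the definition is taken. The choice of $k_0$ then yields $S(a,a+k)\ge\min_{0\le a'\le k^\rho}S(a',a'+k)>30\log k$.

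The only point needing care is extracting $k_0$ correctly from the liminf. Since $\liminf_{k\to\infty} m_k/(30\log k)=L>1$, where $m_k$ denotes the minimum, we get $m_k/(30\log k)>1$ for all $k$ beyond some $k_0$. We also take $k_0\ge 2$ so that $\log k>0$ and dividing by it is legitimate. I do not expect a real obstacle; the lemma is a reformulation of the definition. The extra factor $K$ and the $+1$ exist only so that a single constant works uniformly in $a$, including $a=0$.
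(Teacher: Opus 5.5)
Your proposal is correct and follows essentially the same route as the paper: extract a threshold $K_0$ from the liminf, set $K=\max\{K_0,2\}$, and observe that $k\ge (a^{1/\rho}+1)K$ forces both $k\ge K_0$ and $a\le k^{\rho}$. The conclusion then holds because $a$ lies in the range where the minimum in the definition is taken.
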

\begin{proof}
From the definition of a sparse set we can choose an integer \(K_0\) such that  
\[
S(a,a+k) > 30\log k \qquad\text{for all }k\ge K_0 \text{ and }0\le a\le k^\rho .
\]
Set \(K = \max\{K_0,\,2\}\), so that \(\log k>0\) for \(k\ge K\).  
Fix any \(a\ge 0\) and any \(k\ge (a^{1/\rho}+1)K\).  
Then \(k\ge K\ge K_0\) and \(k > a^{1/\rho}\), whence \(k^\rho > a\) and, because \(a\) is an integer, \(a\le k^\rho\).  
The required inequality follows.
\end{proof}

For an integer $x$ we write   $\numtwo(x)$ for the exponent of the highest power of $2$
dividing $x$.

\begin{lemma}\label{lem:residue}
Let integers $k\geq 1$,  $r\ge3 $ odd and $\ell$ odd.
 Among the $2^k$ numbers 
 \[
 \ell, \ell r, \ell r^2, \ldots ,\ell r^{2^k-1}
 \]
 at most $\delta_2$ lie in the 
same residue class modulo $2^{k}$,
where $\delta_2=2^{\nu_2(r^2-1)-1}$.
\end{lemma}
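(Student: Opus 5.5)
Since $\ell$ is odd, it is invertible modulo $2^k$. Hence $\ell r^i\equiv \ell r^j \pmod{2^k}$ if and only if $r^i\equiv r^j\pmod{2^k}$, so we may take $\ell=1$. The question then becomes how many times each residue is hit by the sequence $r^n \bmod 2^k$ for $n=0,\dots,2^k-1$. This sequence is periodic with period $d=\ord_{2^k}(r)$, the multiplicative order of $r$ in $(\mathbb Z/2^k\mathbb Z)^\times$. Within one period the residues are distinct, so each residue occurring in the list appears exactly $2^k/d$ times; note that $d$ divides $|(\mathbb Z/2^k)^\times|=2^{k-1}$ for $k\ge 2$, so $d\mid 2^k$. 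The lemma therefore reduces to the lower bound
\[
\ord_{2^k}(r)\ \ge\ 2^{k-\numtwo(r^2-1)+1}\quad\text{(or trivially }\ge 1\text{)},
\]
that is, $2^k/d\le 2^{\numtwo(r^2-1)-1}=\delta_2$.

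To compute the order, set $t=\numtwo(r^2-1)$. Since $r$ is odd, $r^2\equiv1\pmod 8$, so $t\ge3$. The key step is the lifting-the-exponent fact: for odd $u$ with $u\equiv 1\pmod 4$,
\[
\numtwo(u^{2^m}-1)=\numtwo(u-1)+m .
\]
This follows by induction from $u^{2^{m+1}}-1=(u^{2^m}-1)(u^{2^m}+1)$, where the second factor is exactly $2\bmod 4$. Apply it with $u=r^2$ to get $\numtwo(r^{2^{m+1}}-1)=t+m$ for $m\ge0$. Consequently $r^{2^{m+1}}\equiv1\pmod{2^k}$ if and only if $t+m\ge k$. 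Since $d$ is a power of $2$, it follows that $d\ge 2^{k-t+1}$ whenever $k-t+1\ge1$, because any exponent $2^{m+1}$ with $m+1\le k-t$ gives $t+m<k$. This yields $2^k/d\le 2^{t-1}=\delta_2$.

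It remains to handle small $k$, where $k\le t$. In that range $\delta_2=2^{t-1}\ge 2^{k-1}$. The residues in the list are all odd, so there are at most $2^{k-1}$ distinct classes, but that counts classes rather than bounding multiplicity. Instead, use $d\ge1$ when $r\equiv1\pmod{2^k}$, and $d\ge 2$ otherwise. For $k\le t-1$ this gives $2^k/d\le 2^{k}\le 2^{t-1}$. For $k=t$, $r\not\equiv 1 \pmod{2^t}$ would give $d\ge2$ and $2^k/d\le 2^{t-1}$. If instead $r\equiv1\pmod{2^t}$, then $\numtwo(r-1)\ge t$ forces $\numtwo(r^2-1)=\numtwo(r-1)+1\ge t+1$, a contradiction. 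Also check the case $k=1$, where $2^k/d\le 2\le\delta_2$ since $t\ge 3$.

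The main obstacle is getting the exact $2$-adic valuation bookkeeping right: odd squares behave uniformly, whereas $r\equiv 3\pmod 4$ does not satisfy the hypothesis of lifting-the-exponent directly. This is why we pass to $r^2$ and use $\numtwo(r^2-1)$ rather than $\numtwo(r-1)$. We also have to handle the edge cases $k\le t$ without the periodicity count breaking down.
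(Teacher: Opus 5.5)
Your proof is correct and follows essentially the same route as the paper. Both arguments reduce to the order $d=\ord_{2^k}(r)$, which is a power of $2$, and use the doubling identity $\nu_2(r^{2^{m+1}}-1)=\nu_2(r^2-1)+m$ to get $d\ge 2^{k-\nu_2(r^2-1)+1}$, so each attained residue occurs exactly $2^k/d\le 2^{\nu_2(r^2-1)-1}$ times. Your separate treatment of $k\le \nu_2(r^2-1)$, in particular ruling out $d=1$ when $k=\nu_2(r^2-1)$, is a welcome extra: the paper states its identity $\nu_2(r^{2^t}-1)=h+t-1$ only for $t\ge 1$ and leaves the case $d_k=1$ implicit.
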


\begin{proof}
Let $h = \nu_2(r^2-1)$
For $k\ge 1$, let $d_k = \ord_{2^k}(r)$ be the multiplicative order of
 $r$ modulo~$2^k$.
Since $(\mathbb{Z}/2^k\mathbb{Z})^\times$ is a $2$-group, 
$d_k$ is a power of $2$, say $d_k=2^m$, with $m<k$.
We bound $m$ from below. Since $r^{2^m}\equiv 1\pmod{2^k}$, 
we have $2^k\mid r^{2^m}-1$.
Using the Lifting-The-Exponent lemma we show by induction that 
$\nu_2(r^{2^t}-1)=h+t-1$ for $t\ge1$. The case $t=1$ is the definition of $h$.
For $t\ge2$, write
$r^{2^t}-1 = (r^{2^{t-1}}-1)(r^{2^{t-1}}+1).
$
Since $r^{2^{t-1}}\equiv1\pmod4$, $\nu_2(r^{2^{t-1}}+1)=1$ 
and by the inductive hypothesis 
$\nu_2(r^{2^{t-1}}-1)=h+t-2$. So 
\[
\nu_2(r^{2^t}-1)=  \nu_2(r^{2^{t-1}}-1)+ \nu_2(r^{2^{t-1}}+1)= h+t-1.
\]
With $t=m$, we get $h+m-1\ge k$, hence $m\ge k-h+1$ and $d_k\ge2^{k-h+1}$.
For a fixed residue $A$ modulo $2^k$, the number of $n\in\{0,\dots,2^k-1\}$ with 
$\ell r^n\equiv A\pmod{2^k}$ is either $0$ or exactly $2^k/d_k$, because $n\mapsto r^n$ is a homomorphism 
$\mathbb{Z}\to(\mathbb{Z}/2^k\mathbb{Z})^\times$ with kernel~$d_k\mathbb{Z}$.
Thus, the maximum number of $n$ yielding the same residue is at most $2^k/d_k\le2^{h-1}$.
Hence, $\delta_2=2^{h-1}=2^{\nu_2(r^2-1)-1}$.
\end{proof}
By digits in base $2$ we mean the numbers $0,1$. 
For an integer $q\ge1$, each integer $x$ such that 
$0\le x<2^q$ has a $q$-bit binary representation
$x=\sum_{i=1}^{q} c_i 2^{i-1}$ with $c_i\in\{0,1\}$.
The digit pairs of $x$ (for this fixed $q$) are the pairs $(c_{q},c_{q-1}),\dots,(c_2,c_1)$.
For a non-negative integer $a$, an \emph{$a$-obedient digit
  pair} is a pair of digits $c_{i-a}c_{i-1-a}$ 
  that are not both equal to $0$ nor both   equal to $1$, and for which the index  $i$ is sparse. 
We let $z_a(x)$ denote the number of $a$-obedient digit pairs $c_{i-a}c_{i-1-a}$ of $x$.

\begin{lemma}\label{lem:obedient}
Let $\mathcal S$ be a sparse set. For all integers $a\ge 0$ and all integers $q\ge \delta_3(a)$,
\[
\#\Bigl\{0\le x< 2^q : z_a(x) < \Bigl\lfloor 2.9\log q\Bigr\rfloor\Bigr\}
< 2^q q^{-2}
\]
where \(\delta_3(a)=\max(\delta_1(a), 10^{30})\)
with $\delta_1(a)$ determined in  \autoref{lem:counting}.
\end{lemma}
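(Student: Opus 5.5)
The plan is to choose a family of \emph{disjoint} candidate pairs, so that their obedience events are independent under the uniform distribution on $\{0\le x<2^q\}$, and then apply a binomial lower-tail (Chernoff-type) estimate.

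\emph{Step 1: choosing the indices.} The pair attached to a sparse index $i$ is $c_{i-a}c_{i-1-a}$. For it to be a valid pair of digits of a $q$-bit number we need $a+2\le i\le q+a$. So $i$ ranges over $\mathcal S\cap[a+2,a+q]$. This is an interval of length $q-2$ starting at $a+2$. Using \autoref{lem:counting}, with $\delta_1$ adjusted slightly to account for the shift $a\mapsto a+2$ and the length $q-2$ (this is harmless for $q\ge\delta_3(a)$), the interval contains more than $30\log(q-2)\ge 29.9\log q$ sparse indices. Two pairs $(i-a,i-1-a)$ and $(j-a,j-1-a)$ overlap only when $|i-j|\le1$. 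Keeping every other sparse index in increasing order therefore gives a set $I$ of at least $m:=\lceil 14.9\log q\rceil$ indices whose pairs are pairwise disjoint.

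\emph{Step 2: independence.} Under the uniform distribution the bits $c_1,\dots,c_q$ are i.i.d.\ fair coins. For disjoint pairs, the events ``$c_{i-a}\ne c_{i-1-a}$'' for $i\in I$ are independent, and each has probability $1/2$. Moreover $z_a(x)$ is at least the number $Z$ of obedient pairs among those in $I$, and $Z\sim\mathrm{Bin}(m,1/2)$. It therefore suffices to show
\[
\Pr\bigl(Z<\lfloor 2.9\log q\rfloor\bigr)<q^{-2}.
\]

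\emph{Step 3: the tail bound.} Set $t=\lfloor 2.9\log q\rfloor$, so that $t\le \theta m$ with $\theta\approx 2.9/14.9<0.2$. The standard estimate gives
\[
\Pr(Z<t)\le 2^{-m}\sum_{j<t}\binom{m}{j}\le 2^{-m}2^{mH(\theta)} = 2^{-m(1-H(\theta))}.
\]
With $H(0.2)\approx0.722$, the exponent is at least $0.278\,m\log 2 / \log 2$ in base-$2$ units. Converting to natural logarithms, this is about $0.278\cdot14.9\cdot 0.693\,\log q\approx 2.87\log q$. Hence the probability is at most $q^{-2.87}<q^{-2}$. Multiplying by $2^q$ gives the count bound.

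The main obstacle is bookkeeping rather than ideas. One has to make sure the halving in Step 1 together with the constant in the entropy bound still leaves an exponent strictly above $2$. There is comfortable slack here: the exponent is about $2.87$, and the largeness condition $q\ge10^{30}$ absorbs the rounding and the shift $q\mapsto q-2$. One also has to check that the application of \autoref{lem:counting} at the shifted starting point $a+2$ is covered by the hypothesis $q\ge\delta_1(a)$. If it is not, I would instead use the interval $[a,a+q]$ and discard at most two boundary indices, which costs only an $O(1)$ loss in $m$.
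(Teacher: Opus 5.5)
Your proposal is correct and is essentially the paper's proof: you take every other sparse index in $[a+2,a+q]$ to get disjoint, hence independent, pairs, and then bound the binomial lower tail with the entropy estimate. Your bound $\sum_{j\le\theta m}\binom{m}{j}\le 2^{mH(\theta)}$ even avoids the paper's extra factor $L\le q$ and its auxiliary threshold $\lfloor 6I/31\rfloor$. However, $q\ge\delta_3(a)$ does not in general license applying \autoref{lem:counting} at the shifted start $a+2$ with length $q-2$: for $a=0$ and $2^{1/\rho}>10^{30}$ one has $\delta_1(2)>\delta_3(0)+2$. So the step you actually need is your fallback, which is exactly what the paper does: apply \autoref{lem:counting} to $[a,a+q]$ and discard the at most two indices $a,a+1$. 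This gives more than $15\log q-1\ge\lceil 14.9\log q\rceil$ disjoint pairs.
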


\begin{proof}
Lemma~\ref{lem:counting} gives $S(a,a+q) > 30\log q$ for $q\ge\delta_1(a)$.
Choose a maximum subset ${\mathcal I}\subseteq (\S\cap[a+2,a+q])$ with no two consecutive elements.
Let $I=\#{\mathcal I}$. Then,
\[
 I \ge \frac12\bigl(S(a,a+q)-2\bigr) > 15\log q - 1.
\]
Since any two distinct elements of $\mathcal I$ differ by at least $2$, the first coordinates  of the $a$-obedient digit pairs $(i-a, i-1-a)$, with $i \in \mathcal I$, are necessarily pairwise distinct.
Set $L = \bigl\lfloor \frac{6}{31}I \bigr\rfloor$. Since $L\le I/2$, the binomial tail bound yields
\[
\#\{0\leq x<2^q: z_a(x)<L\}\le 2^{q-I}\sum_{t=0}^{L-1}\binom{I}{t}
\le L\cdot 2^{q-I}\binom{I}{L}.
\]
Using the entropy inequality $\binom{n}{k}\le 2^{nH(k/n)}$ and $H(6/31)<0.709$ (so $1-H(6/31)>0.291$),
\[
L\cdot 2^{q-0.291I}\le q\cdot 2^{q-0.291I}.
\]
Since $I > 15\log q-1$, $0.291I > 4.365\log q-0.291$.
For $q\ge 10^5$,
\[
4.365\log q-0.291 > 3\log_2 q = \frac{3}{\ln2}\log q \approx 4.328\log q,
\]
so $0.291I > 3\log_2 q$ and $2^{-0.291I} < q^{-3}$.  Consequently
\[
\#\{0\leq x<2^q: z_a(x)<L\}
< q\cdot 2^q\cdot q^{-3} = 2^q q^{-2}.
\]
Finally, for all $q\ge 10^{30}$,
\[
\frac{6}{31}I > \frac{90}{31}\log q - \frac{6}{31}
            \ge 2.9\log q.
\]
Hence $L = \bigl\lfloor \frac{6}{31}I \bigr\rfloor \ge \lfloor 2.9\log q\rfloor$.
\end{proof}

\begin{lemma}\label{lem:geometric}
Let $\mathcal S$ be a sparse set, let $a\geq 0$ be an integer and let $r\ge 3$ and  $\ell>0$ be odd integers.  Then, for every $N\ge \delta_4(a)$,
\[
\#\Bigl\{ 0\leq n < N : z_a(\ell r^{\,n}) < \bigl\lfloor 2.9\log\lfloor\log_2 N\rfloor \bigr\rfloor \Bigr\}
\le (\log 2)^2\,2^{\nu_2(r^2-1)+2}\frac{N}{(\log N)^2},
\]
where \(\delta_4(a)=2^{\delta_3(a)}\)
with $\delta_3(a)$ determined in Lemma~\ref{lem:obedient}.
\end{lemma}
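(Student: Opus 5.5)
We set $q=\lfloor\log_2 N\rfloor$, so that $2^q\le N<2^{q+1}$. Since $N\ge\delta_4(a)=2^{\delta_3(a)}$, we have $q\ge\delta_3(a)$, and Lemma~\ref{lem:obedient} applies at this $q$. The quantity $z_a(x)$ is defined through the $q$-bit representation of $x$, so for an integer $\ell r^n$ we read $z_a(\ell r^n)$ as $z_a$ of its residue modulo $2^q$. Only the low $q$ bits enter the digit pairs, so the property $z_a(\ell r^n)<\lfloor 2.9\log q\rfloor$ depends only on $\ell r^n \bmod 2^q$. Write
\[
B=\{0\le x<2^q: z_a(x)<\lfloor 2.9\log q\rfloor\}.
\]
Lemma~\ref{lem:obedient} gives $\#B<2^q q^{-2}$.

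Next we split $\{0,\dots,N-1\}$ into consecutive blocks of length $2^q$. Since $N<2^{q+1}$, there are at most $\lceil N/2^q\rceil\le 2$ blocks. Within a block $\{m2^q,\dots,m2^q+2^q-1\}$, the numbers are $\ell r^{m2^q}\cdot r^{j}$ with $0\le j<2^q$, and $\ell'=\ell r^{m2^q}$ is odd. Lemma~\ref{lem:residue} with $k=q$ then says that each residue class modulo $2^q$ is hit at most $\delta_2=2^{\nu_2(r^2-1)-1}$ times. Hence the number of $n$ in a block with $\ell r^n \bmod 2^q\in B$ is at most $\delta_2\#B<2^{\nu_2(r^2-1)-1}2^q q^{-2}$.

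Summing over at most two blocks bounds the count by $2^{\nu_2(r^2-1)}2^q q^{-2}$. We now convert to $N$ and $\log N$. We have $2^q\le N$. Also $q>\log_2 N-1$ gives $q^{-2}\le (\log 2)^2/(\log N-\log 2)^2$. For $N$ this large, $(\log N-\log2)^{-2}\le 2(\log N)^{-2}$, and indeed any factor up to $4$ is absorbed. This yields
\[
\#\{\dots\}\le 2^{\nu_2(r^2-1)}\cdot 4\,(\log 2)^2\frac{N}{(\log N)^2},
\]
which is exactly $(\log 2)^2\,2^{\nu_2(r^2-1)+2}N/(\log N)^2$.

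The main point needing care is the interpretation of $z_a$ on $\ell r^n$, which exceeds $2^q$: it must be taken modulo $2^q$, consistent with the definition via $q$-bit representations. Beyond that, the block decomposition must use full periods of length $2^q$. A partial last block is harmless because it is a subset of a full set of $2^q$ consecutive exponents, so it can be padded to a full block and the bound of Lemma~\ref{lem:residue} still applies. The remaining steps are routine constant bookkeeping.
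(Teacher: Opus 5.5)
Your proof is correct and follows the paper's argument: bound the bad residues modulo $2^q$ by Lemma~\ref{lem:obedient}, multiply by the multiplicity $\delta_2$ from Lemma~\ref{lem:residue} over at most two runs of $2^q$ consecutive exponents, and convert $2^q q^{-2}$ into $N/(\log N)^2$. You do the conversion via $q>\log_2 N-1$ and $N\ge 4$, the paper via $q\ge \log N/(2\log 2)$, and both give the same constant. Two of your steps are cleaner renderings of the paper's: applying Lemma~\ref{lem:residue} block by block with the odd multiplier $\ell r^{m2^q}$, and reading $z_a(\ell r^n)$ through its low $q$ bits, where the paper only uses $z_a(\ell r^n)\ge z_a(Y_n)$ and your reading makes this an equality.
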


\begin{proof}
Set $q = \lfloor\log_2 N\rfloor$ and $L = \bigl\lfloor 2.9\log q \bigr\rfloor$.
For all $N\ge \delta_4(a)$ we have $q \ge \delta_3(a)$.
Applying Lemma~\ref{lem:obedient} gives
\[
\#\{\,0<x\le 2^q : z_a(x) < L\,\} < 2^q q^{-2}.
\]
For each $n$,  $0\leq n < N$, let $Y_n = \ell r^{\,n} \bmod 2^q$.  Then $z_a(\ell r^{\,n}) \ge z_a(Y_n)$ and $Y_n \neq 0$ because $\ell,r$ are odd.
The sequence $(Y_n)_{n\ge 0}$ is periodic, and its period divides $2^q$.  
Therefore, as $n$ ranges from $0$ to $N-1$, the sequence covers at most two full periods of length $2^q$; that is,  at most $2 \cdot 2^q$ terms.
By Lemma~\ref{lem:residue} each residue class modulo $2^q$ occurs at most $\delta_2=2^{\nu_2(r^2-1)-1}$ times
so for every $x$,
\[
\#\{0\leq n < N : Y_n = x\} \le 2\delta_2 .
\]
Consequently,
\begin{align*}
\#\{0\leq n < N : z_a(\ell r^{\,n}) < L\}
&\le \sum_{\substack{x=0\\ z_a(x)<L}}^{2^q-1} 2\delta_2
   \le 2\delta_2 \cdot 2^q q^{-2} \
\le\  2\delta_2\, N\, q^{-2}.
\end{align*}
Now $q \ge \frac{\log N}{2\log 2}$ for every $N\ge 4$, so $q^{-2} \le (2\log 2)^2 (\log N)^{-2}$.
Thus
\[
\#\{n < N : z_a(\ell r^{\,n}) < L\} \le 2(2\log 2)^2\delta_2 \frac{N}{(\log N)^2}.
\]
Since 
  $L = \bigl\lfloor 2.9\log q \bigr\rfloor$ and $q = \lfloor\log_2 N\rfloor$  the lemma is proved.
\end{proof}

\begin{lemma}\label{lem:combined}
Let $\mathcal S$ be a sparse set, let $a\geq 0$ be an integer and let $r\ge 3$ and  $\ell>0$ be odd integers.
For every $N\ge \delta_4(a)$,
\[
\sum_{n=0}^{N-1}\;\prod_{\substack{k>a\\ k\in\mathcal S}}
\bigl|\cos(\pi \ell r^{\,n}/2^{\,k-a})\bigr|
\le 2^{\nu_2(r^{2}-1)+2}\,\frac{N}{(\log N)^{1.005}} .
\]
with $\delta_4(a)$ determined in Lemma~\ref{lem:geometric} .
\end{lemma}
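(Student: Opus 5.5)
The plan is to split the sum over $n$ according to whether $\ell r^{n}$ has few or many $a$-obedient digit pairs, using Lemma~\ref{lem:geometric} for the few case and a pointwise cosine bound for the many case. Set $q=\lfloor\log_2 N\rfloor$ and $L=\lfloor 2.9\log q\rfloor$. Every factor of the product is at most $1$, so each term is at most $1$. By Lemma~\ref{lem:geometric}, the number of bad $n<N$ with $z_a(\ell r^n)<L$ is at most $(\log 2)^2 2^{\nu_2(r^2-1)+2} N/(\log N)^2$. For $N\ge\delta_4(a)$ this is far smaller than half of the target bound $2^{\nu_2(r^2-1)+2}N/(\log N)^{1.005}$.

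For the good $n$, I would bound the product pointwise. The key observation concerns $x=\ell r^n$ and a sparse index $i$ with the pair $(c_{i-a},c_{i-1-a})$ obedient. By the indexing convention, $c_j$ is the coefficient of $2^{j-1}$. The fractional part of $x/2^{i-a}$ then has leading binary digits $c_{i-a},c_{i-1-a}$, read at weights $1/2$ and $1/4$. Since these two digits differ, this fractional part lies in $[1/4,3/4)$. Consequently
\[
\bigl|\cos(\pi x/2^{i-a})\bigr|\le\cos(\pi/4)=2^{-1/2}.
\]
The index $k=i$ occurring in the product has $k\in\mathcal S$, and I need $k>a$. This holds because obedient indices are taken with $i-1-a\ge1$, as in the proof of Lemma~\ref{lem:obedient}, where $i\in[a+2,a+q]$. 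Distinct obedient indices give distinct factors in the product, and all other factors are bounded by $1$. Hence the product is at most $2^{-z_a(x)/2}\le 2^{-L/2}$.

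It remains to compare $2^{-L/2}$ with $(\log N)^{-1.005}$. We have
\[
2^{-L/2}\le 2^{1/2}\,e^{-1.45\log 2\,\log q}=\sqrt2\, q^{-1.005\ldots},
\]
since $1.45\log 2\approx1.0051$. Using $q\ge\log_2 N-1\ge \log N/\log2-1$, which is larger than $\log N$ for large $N$, the good $n$ contribute at most $N\sqrt2\,(\log N)^{-1.005}$. This constant should fit within the other half of $2^{\nu_2(r^2-1)+2}$, since $\nu_2(r^2-1)\ge3$ and the target constant is therefore at least $32$. Adding the two contributions gives the claim.

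The main obstacle is this final bookkeeping. The exponent $1.45\log2$ only barely exceeds $1.005$, so I must check that the floor in $L$ and the factor $\sqrt2$ are absorbed. They are, because $q^{-1.0051}$ versus $(\log N)^{-1.005}$ leaves the large factor $(\log_2 e)^{1.005}$ together with the slack in the constant $2^{\nu_2(r^2-1)+2}$. I would also double-check the digit-indexing convention, so that the obedient pair really sits at the top two fractional binary places of $x/2^{i-a}$.
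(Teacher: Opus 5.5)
Your proposal is correct and follows the paper's proof essentially step for step. The paper makes the same split into $n$ with $z_a(\ell r^n)<L$, bounded in count by Lemma~\ref{lem:geometric}, and $n$ with $z_a(\ell r^n)\ge L$, where it uses the same $[1/4,3/4)$ fractional-part observation to get the pointwise bound $2^{-L/2}\le\sqrt2\,q^{-1.45\log 2}$. The only differences are in the final bookkeeping: you use $q>\log N$ and split $2^{\nu_2(r^2-1)+2}$ into two halves, whereas the paper uses $q\ge \log N/(2\log 2)$ and checks $K_{\mathcal A}+K_{\mathcal B}\le 2^{\nu_2(r^2-1)+2}$ with explicit constants.
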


\begin{proof}
Set $q = \lfloor\log_2 N\rfloor$ and $L = \bigl\lfloor 2.9\log q \bigr\rfloor$.
Split $\{0,\dots,N-1\}$ into
\[
\mathcal{A} = \{ 0\leq n < N : z_a(\ell r^{\,n}) < L \},\qquad
\mathcal{B} = \{ 0\leq n < N : z_a(\ell r^{\,n}) \ge L \}.
\]
\noindent\textit{Contribution of $\mathcal{A}$.}
Each factor in the product is $\le 1$.  By Lemma~\ref{lem:geometric} for all $N\ge \delta_4(a)$,
\[
\#\mathcal{A} \le (\log 2)^2\,2^{\nu_2(r^2-1)+2} \frac{N}{(\log N)^2},
\]
Hence,
\[
\sum_{n\in\mathcal{A}} \prod_{\substack{k>a\\ k\in\mathcal S}} |\cos(\pi \ell r^{\,n}/2^{\,k-a})|
\le K_\mathcal A \frac{N}{(\log N)^2}, \qquad\text{with } K_\mathcal A = (\log 2)^2\,2^{\nu_2(r^2-1)+2}.
\]
\noindent\textit{Contribution of $\mathcal{B}$.}
For $n\in\mathcal{B}$ write $x = \ell r^{\,n} = \sum_{j\ge1} c_j 2^{\,j-1}$ ($c_j\in\{0,1\}$).
For any sparse index $i>a$ with $i\le a+q$, the two highest bits of $x\bmod 2^{i-a}$ are $c_{i-a}$ and $c_{i-a-1}$.
If the pair is obedient, the fractional part of $x/2^{i-a}$ lies in $[\frac14,\frac34)$, where $|\cos(\pi x/2^{i-a})|\le 1/\sqrt2$.
By definition, $z_a(x)$ counts exactly such indices $i$ with $a+2\le i\le a+q$.
The indices $i>a+q$ and $a<i<a+2$ contribute at most a factor $1$.
Since $n\in\mathcal{B}$, we have $z_a(x)\ge L$, and therefore
\[
\prod_{\substack{k>a\\ k\in\mathcal S}} |\cos(\pi x/2^{k-a})|
\le \Bigl(\frac1{\sqrt2}\Bigr)^{\!L}
= 2^{-L/2}.
\]
Now $L = \bigl\lfloor 2.9\log q \bigr\rfloor \ge 2.9\log q - 1$, so
\[
2^{-L/2} \le 2^{-(2.9\log q - 1)/2}
        = \sqrt2\; q^{-(2.9/2)\ln 2}.
\]
Since $q = \lfloor\log_2 N\rfloor \ge \frac{\log N}{2\log 2}$ for $N\ge 4$, we obtain
\[
\sqrt{2}q^{-\frac{2.9}{2}\ln 2 } \le \sqrt{2}(2\log 2)^{\frac{2.9}{2}\ln 2 }\, (\log N)^{-\frac{2.9}{2}\ln 2 } \le K_\mathcal B\, (\log N)^{-1.005 },
\]
with $K_\mathcal B = \sqrt{2}(2\log 2)^{\frac{2.9}{2}\ln 2 }$.
Thus, for every $n\in\mathcal{B}$,
\[
\prod_{\substack{k>a\\ k\in\mathcal S}} 
|\cos(\pi \ell r^{n}/2^{k-a})|
\le K_\mathcal{B}\, (\log N)^{-1.005}.
\]
Consequently,
\[
\sum_{n\in\mathcal{B}} \prod_{\substack{k>a\\ k\in\mathcal S}}  |\cos(\pi \ell r^{n}/2^{k-a})|
\le K_\mathcal{B}\,\frac{N}{(\log N)^{1.005}} .
\]
\noindent\textit{Combining the contributions of ${\mathcal A}$ and ${\mathcal B}$.}
For $N\ge N_1 = \max(N_0, 4)$,
\[
\sum_{n=0}^{N-1} \prod_{\substack{k>a\\ k\in\mathcal S}}  |\cos(\pi \ell r^{n}/2^{k-a})|
\le K_\mathcal{A}\,\frac{N}{(\log N)^2} + K_\mathcal{B}\,\frac{N}{(\log N)^{1.005}} .
\]
For $N\ge 3$ we have $(\log N)^{-2} \le (\log N)^{-1.005}$.
Hence, the whole sum is bounded by $(K_\mathcal{A} + K_\mathcal{B}) N/(\log N)^{1.005}\le 2^{\nu_2(r^{2}-1)+2}  N/(\log N)^{1.005}$.
\end{proof}

\begin{lemma}
\label{lem:valcount}
Let $r\ge 3$ be odd. For every integer $d\ge 1$ and integer $N\ge 1$,
\[
\#\{1\le g\le N : \nu_2(r^{g}-1)=d\}
\;\le\; 2^{\,\nu_2(r^{2}-1)-1}\,\frac{N}{2^{\,d}} .
\]
\end{lemma}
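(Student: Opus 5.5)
Set $h=\nu_2(r^2-1)\ge 3$, since $r$ is odd and $r^2\equiv 1 \pmod 8$. The plan is to compute $\nu_2(r^g-1)$ exactly as a function of $g$, using the Lifting-The-Exponent identity already established in the proof of Lemma~\ref{lem:residue}. That proof shows $\nu_2(r^{2^t}-1)=h+t-1$ for $t\ge1$.

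\textbf{Odd $g$.} Write $r^g-1=(r-1)(r^{g-1}+\dots+1)$. The second factor is a sum of $g$ odd terms, hence odd, so $\nu_2(r^g-1)=\nu_2(r-1)$.

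\textbf{Even $g$.} Write $g=2^t u$ with $u$ odd and $t\ge1$. Then
\[
r^g-1=(r^{2^t}-1)\bigl(r^{2^t(u-1)}+\dots+1\bigr),
\]
and the second factor is again odd. Hence $\nu_2(r^g-1)=h+t-1=h+\nu_2(g)-1$.

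Fix $d\ge1$ and count the $g\le N$ with $\nu_2(r^g-1)=d$.

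\textbf{Odd contributions.} These occur only when $d=\nu_2(r-1)$. Since $\nu_2(r-1)+\nu_2(r+1)=h$ and $\nu_2(r+1)\ge1$, we have $d=\nu_2(r-1)\le h-1$. There are at most $\lceil N/2\rceil$ odd $g\le N$.

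\textbf{Even contributions.} These require $\nu_2(g)=t=d-h+1\ge1$, so $d\ge h$. The number of $g\le N$ with $\nu_2(g)=t$ is at most $N/2^{t+1}+1/2$; precisely it is $\lfloor N/2^t\rfloor-\lfloor N/2^{t+1}\rfloor\le \lceil N/2^{t+1}\rceil$. This gives
\[
\lceil N/2^{d-h+2}\rceil .
\]

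The two cases are disjoint in $d$, since odd $g$ need $d\le h-1$ and even $g$ need $d\ge h$. So only one contributes for a given $d$, and it remains to compare with $2^{h-1}N/2^d$.

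\textbf{Case $d\ge h$.} We need $\lceil N/2^{d-h+2}\rceil\le 2^{h-1}N/2^{d}=2\cdot N/2^{d-h+2}$. This holds whenever $N/2^{d-h+2}\ge1$, since $\lceil y\rceil\le 2y$ for $y\ge 1$. When $N<2^{t}$ the count is $0$, because no $g\le N$ is divisible by $2^t$. In the remaining range $2^t\le N<2^{t+1}$ the count is exactly $1$. Here $2^{h-1}N/2^d=N/2^{t}\ge1$, which suffices.

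\textbf{Case $d\le h-1$.} The bound becomes $2^{h-1-d}N\ge N\ge\lceil N/2\rceil$, which holds trivially.

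\textbf{Main obstacle.} The main care needed is the ceiling/boundary bookkeeping for small $N$ relative to $2^t$. This is handled by the observation that the count vanishes unless $2^t\le N$, which makes the right-hand side at least $1$.
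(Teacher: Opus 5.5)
Your proof is correct and follows essentially the same route as the paper: the Lifting-The-Exponent formula $\nu_2(r^g-1)=\nu_2(r-1)$ for odd $g$ and $h+\nu_2(g)-1$ for even $g$. You then use the trivial bound $N\le 2^{h-1-d}N$ when $d=\nu_2(r-1)\le h-1$, and count the $g$ with $\nu_2(g)=d-h+1$ when $d\ge h$. The only difference is that the paper bounds that last count by the number of multiples of $2^{t}$, namely $\lfloor N/2^{t}\rfloor\le N/2^{t}=2^{h-1}N/2^{d}$ with $t=d-h+1$, which is already the right-hand side; your exact count with ceilings and the boundary analysis for $N<2^{t+1}$ are correct but unnecessary.
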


\begin{proof}
The $2$-adic valuation of $r^{g}-1$ is given by Lifting‑the‑Exponent lemma,
\[
\nu_2(r^{g}-1)=
\begin{cases}
\nu_2(r-1) & \text{if $g$ is odd},\\[4pt]
\nu_2(r^{2}-1)+\nu_2\!\bigl({g}/{2}\bigr) & \text{if $g$ is even}.
\end{cases}
\]
Let $G=\{1\le g\le N : \nu_2(r^{g}-1)=d\}$.
If $d<\nu_2(r^{2}-1)$ and $d\neq\nu_2(r-1)$, there is no $g$ in $G$, hence $\#G=0$ and the lemma is proved.
$\nu_2(r^{g}-1)=d$,
Now assume the set $G$  is non‑empty. Then either $d=\nu_2(r-1)$ or
$d\ge\nu_2(r^{2}-1)$.
If $d\ge\nu_2(r^{2}-1)$, then $g$ must be even and
$\nu_2(g/2)=d-\nu_2(r^{2}-1)$; hence $g$ is a multiple of
$2^{\,d-\nu_2(r^{2}-1)+1}$.  The number of such $g\le N$ is at most
\[
\frac{N}{2^{\,d-\nu_2(r^{2}-1)+1}}
= 2^{\,\nu_2(r^{2}-1)-1}\,\frac{N}{2^{\,d}} .
\]
If $d=\nu_2(r-1)$, then $g$ must be odd, so there are at most $N$ such $g$.  In this case the right‑hand side of the claimed inequality equals
$N\cdot 2^{\,\nu_2(r+1)-1}$, which is greater than or equal to $N$.
\end{proof}

\begin{lemma} 
\label{lem:L2-key}
Let 
\(\mathcal S\) be a sparse set.
Then, for every odd integer \(r\ge 3\)
, every \(N\ge \delta_7\)
and every non‑zero integer \(h\) with \(|h|\le\log\log N\),
\[
\int_{\C}\Bigl|\frac1N\sum_{j=1}^{N} e^{2\pi i r^{j} h x}\Bigr|^{2}\,\mathrm d\mu(x)
\le \frac{2^{\,2\nu_2(r^{2}-1) + 4}}{(\log N)^{1.005}},
\]
where 
\(\delta_7\) is the smallest integer greater than or equa to  \(3\) satisfying 
\(\delta_7 \ge \delta_4\!\bigl(\lfloor 3\log_2\log \delta_7\rfloor + 1\bigr)\), with~$\delta_4$ from \autoref{lem:geometric}.
\end{lemma}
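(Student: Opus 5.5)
Expanding the square and integrating against $\mu$ termwise gives
\[
\int_{\C}\Bigl|\frac1N\sum_{j=1}^{N} e^{2\pi i r^{j} h x}\Bigr|^{2}\,\mathrm d\mu(x)
=\frac{1}{N^2}\sum_{j,j'=1}^{N}\widehat{\mu}\bigl(h(r^{j}-r^{j'})\bigr),
\]
up to the sign convention in $\widehat\mu$, which is irrelevant after taking absolute values. The diagonal $j=j'$ contributes $1/N$. By symmetry the off-diagonal terms are bounded by $\frac{2}{N^2}\sum_{j'<j}\prod_{k\in\S}|\cos(\pi h(r^j-r^{j'})/2^k)|$. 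Write $j=j'+g$ with $1\le g\le N-1$, so that $h(r^j-r^{j'}) = h r^{j'}(r^g-1)$. Set $d=\nu_2(r^g-1)$, $e=\nu_2(h)$ and $a=d+e$. Then $h(r^g-1)=2^{a}u$ with $u$ odd, and the frequency equals $2^a\ell r^{j'}$ with $\ell=|u|$ odd (cosine is even). The factors with $k\le a$ equal $1$ in absolute value. For $k>a$ the factor is $|\cos(\pi \ell r^{j'}/2^{k-a})|$, which is exactly the product in Lemma~\ref{lem:combined}. Hence for fixed $g$ the inner sum over $j'$ (with $0\le j'<N$ after reindexing $r^{j'}$ absorbed into $\ell$ if needed) is bounded by $2^{\nu_2(r^2-1)+2}N/(\log N)^{1.005}$, provided $N\ge\delta_4(a)$.

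The main obstacle is that $a$ depends on $g$ and may be large, so $N\ge\delta_4(a)$ can fail. I would split at the threshold $A=\lfloor 3\log_2\log N\rfloor$. Since $|h|\le\log\log N$, we have $e\le\log_2\log\log N$, which is small. For $g$ with $a\le A$, monotonicity of $\delta_4$ together with the defining property of $\delta_7$ should give $N\ge\delta_4(A+1)\ge\delta_4(a)$. This uses the fact that $\delta_4(\lfloor 3\log_2\log N\rfloor+1)$ grows much slower than $N$, so the inequality valid at $\delta_7$ persists for all $N\ge\delta_7$. I expect this step to need a short argument rather than being immediate. For these $g$, summing over at most $N$ values of $g$ gives a contribution of $\frac{2}{N^2}\cdot N\cdot 2^{\nu_2(r^2-1)+2}N/(\log N)^{1.005}$.

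For $g$ with $a>A$ we have $d>A-e$, and I would bound the product trivially by $1$. By Lemma~\ref{lem:valcount}, the number of $g\le N$ with $\nu_2(r^g-1)\ge A-e$ is at most $2^{\nu_2(r^2-1)-1}\sum_{d\ge A-e}N2^{-d}\le 2^{\nu_2(r^2-1)}N2^{e-A}$. Each such $g$ contributes at most $N$ pairs, so the contribution is at most $\frac{2}{N^2}\cdot N^2\,2^{\nu_2(r^2-1)}2^{e-A}$. With $2^{e}\le\log\log N$ and $2^{-A}\le 2(\log N)^{-3}$, this is $O(2^{\nu_2(r^2-1)}(\log N)^{-2.9})$, far below $(\log N)^{-1.005}$.

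Adding the three pieces gives $1/N + 2^{\nu_2(r^2-1)+3}(\log N)^{-1.005} + O(2^{\nu_2(r^2-1)}(\log N)^{-2})$. For $N\ge 3$, and using $2^{\nu_2(r^2-1)}\ge 8$, this is comfortably below $2^{2\nu_2(r^2-1)+4}/(\log N)^{1.005}$, which is the generous constant in the statement.
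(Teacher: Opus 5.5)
Your proposal is correct and is essentially the paper's proof: expand the square, write the frequency as $2^{a}\ell\,r^{q}$ with $a=\nu_2(h)+\nu_2(r^g-1)$ and $\ell$ odd, use Lemma~\ref{lem:combined} when $a$ is small, and use the trivial bound with Lemma~\ref{lem:valcount} when it is large; the differences are cosmetic, since you threshold on $a$ at $\lfloor 3\log_2\log N\rfloor$ instead of on $\nu_2(r^g-1)$ at $\lfloor 2\log_2\log N\rfloor$, and you count the small-offset $g$ trivially by $N$, which even gives a better constant than the paper's use of Lemma~\ref{lem:valcount} there. The paper handles the persistence step you flag with the same slow-growth remark, and that remark alone only gives $N\ge\delta_4(\lfloor 3\log_2\log N\rfloor+1)$ eventually; you need just $N\ge\delta_4(A)$, and this does hold for every $N\ge\delta_7$: with $a_0=\lfloor 3\log_2\log\delta_7\rfloor$ it is immediate when $A\le a_0+1$, and if $A=a_0+1+j$ with $j\ge 1$ then $\log_2 N>2^{j/3}\log_2\delta_7\ge 2^{j/3}\delta_3(a_0+1)\ge\delta_3(a_0+1+j)$, where the last step holds because $\delta_3(a_0+1+j)\le(1+\tfrac{j}{a_0+1})^{1/\rho}\,\delta_3(a_0+1)$ and the definition of $\delta_7$ forces $(a_0+1)^{1/\rho}<2^{(a_0+1)/3}$, i.e.\ $\tfrac{1}{\rho(a_0+1)}<\tfrac{\log 2}{3\log(a_0+1)}$.
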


\begin{proof}
Write \(h = 2^{\nu_2(h)}u\) with \(u\) odd.  Expanding the square and using
\(|\widehat\mu(t)| = \prod_{k\in\mathcal S}|\cos(\pi t/2^{k})|\) gives
\[
\int_{\C}\Bigl|\sum_{j=1}^{N} e^{2\pi i r^{j} h x}\Bigr|^{2}\,\mathrm d\mu(x)
\le \sum_{p=1}^{N} \sum_{q=1}^{N}\;
      \prod_{k\in\mathcal S}\bigl|\cos\bigl(\pi h(r^{p}-r^{q})/2^{k}\bigr)\bigr|.
\]
The diagonal \(p=q\) contributes \(N\), since \((r^{p}-r^{q})=0\) and \(\cos(0)=1\).

For \(p>q\) set \(g = p-q\ge 1\) and \(d(g)=\nu_2(r^{g}-1)\).
Factoring \(h(r^{p}-r^{q}) = 2^{\nu_2(h)+d(g)} \cdot \ell_g r^{q}\) with \(\ell_g\) odd,
the product becomes 
\[
\prod_{\substack{k>\nu_2(h)+d(g)\\ k\in\mathcal S}}
\bigl|\cos\bigl(\pi \ell_g r^{\,q}/2^{\,k-\nu_2(h)-d(g)}\bigr)\bigr|.
\]
Thus, the off‑diagonal terms become
\[
2\sum_{g=1}^{N-1}\;\sum_{q=1}^{N-g}\;
  \prod_{\substack{k>\nu_2(h)+d(g)\\ k\in\mathcal S}}
  \bigl|\cos(\pi\ell_g r^{\,q}/2^{\,k-\nu_2(h)-d(g)})\bigr|.
\]
Set the threshold
$B = \bigl\lfloor 2\log_2(\log N)\bigr\rfloor.$

{\em Small valuations \(d(g)\le B\):}
For
$g$ such that \(d(g)\le B\)
we extend the inner sum harmlessly to the full block
\(q=0,\dots,N-1\) and apply Lemma~\ref{lem:combined} with \(a = \nu_2(h)+d \) and odd \(\ell = \ell_g\),
\begin{align*}
\sum_{q=0}^{N-g}\;\prod_{\substack{k>\nu_2(h)+d(g)\\ k\in\mathcal S}}
      \bigl|\cos(\pi\ell_g r^{\,q}/2^{\,k-\nu_2(h)-d(g)})\bigr|
&\le 
\sum_{q=0}^{N-1}\;\prod_{\substack{k>\nu_2(h)+d(g)\\ k\in\mathcal S}}
      \bigl|\cos(\pi\ell_g r^{\,q}/2^{\,k-\nu_2(h)-d(g)})\bigr|
      \\
&\le 2^{\nu_2(r^{2}-1)+2}\,\frac{N}{(\log N)^{1.005}},
\end{align*}
provided \(N \ge \delta_4(\nu_2(h)+d(g))\).
Let \(\delta_7\) be the smallest integer \(\ge 3\) satisfying 
\linebreak
\(\delta_7 \ge \delta_4\!\bigl(\lfloor 3\log_2\log \delta_7\rfloor + 1\bigr)\).  
Since the function \(f(N) = \delta_4\!\bigl(\lfloor 3\log_2\log N\rfloor + 1\bigr)\) grows slower than any positive power of \(N\) (indeed \(\log f(N) = O((\log\log N)^{1/\rho})\)), the inequality \(N \ge f(N)\) persists for all \(N \ge \delta_7\).  
Hence for every \(N \ge \delta_7\) we have \(N \ge \delta_4(\nu_2(h)+d(g))\), and \autoref{lem:combined} applies.
Now we count how many \(g\) have a given valuation $d(g)=d$.  By Lemma~\ref{lem:valcount},
\[
\#\{g\le N : d(g)=d\} \le 2^{\nu_2(r^{2}-1)-1}\,\frac{N}{2^{\,d}} .
\]
Summing over \(d=1,\dots,B\) gives at most \(2^{\nu_2(r^{2}-1)-1}N\) such \(g\).
Consequently, the total contribution of all small‑valuation terms is at most
\[
 2^{\nu_2(r^{2}-1)+2}\frac{N}{(\log N)^{1.005}}\cdot 2^{\nu_2(r^{2}-1)-1}N
   = 2^{2\nu_2(r^{2}-1)+1}\,\frac{N^{2}}{(\log N)^{1.005}} .
\]

\begin{samepage}
{\em Large valuations \(d(g) > B\).}
Here we bound each cosine factor by \(1\); each inner sum is \(\le N\).
The number of $g$ such that \(d(g)>B\) is
\[
\#\{g\le N : d(g)>d\} 
\leq \sum_{d>B} 2^{\nu_2(r^{2}-1)-1}\frac{N}{2^{\,d}}
= 2^{\nu_2(r^{2}-1)-1}\frac{N}{2^{\,B}} .
\]
Since \(B = \lfloor 2\log_2(\log N)\rfloor\), we have
\(2^{\,B} \ge \frac12 (\log N)^{2}\) for all \(N\ge 3\).
Thus, 
the total contribution of all large valuation terms is at most 
\[
2^{\nu_2(r^{2}-1)-1}\frac{N}{2^{\,B}}\,N
\le 2^{\nu_2(r^{2}-1)}\frac{N^{2}}{(\log N)^{2}}
\le 2^{\nu_2(r^{2}-1)}\frac{N^{2}}{(\log N)^{1.005}} ,
\]
where the last inequality uses \((\log N)^{2} \ge (\log N)^{1.005}\) for \(N\ge 3\).
\end{samepage}

\noindent{\em Combining the estimates.}
The double sum (before the symmetry factor \(2\)) satisfies
\[
\sum_{g=1}^{N-1}\sum_{q=1}^{N-g} \cdots
\le \bigl(2^{2\nu_2(r^{2}-1)+1}+ 2^{\nu_2(r^{2}-1)}\bigr)\frac{N^{2}}{(\log N)^{1.005}} .
\]
Multiplying by \(2\) gives the total off‑diagonal bound
\[
2\sum_{g=1}^{N-1}\sum_{q=1}^{N-g} \cdots
\le 2^{2\nu_2(r^{2}-1)+3}\frac{N^{2}}{(\log N)^{1.005}} .
\]
Finally, adding the diagonal \(N\) and dividing by \(N^{2}\) yields, for
\(N\ge \delta_7 \ge 3\),
\[
\int_{\C}\Bigl|\frac1N\sum_{j=1}^{N} e^{2\pi i r^{j} h x}\Bigr|^{2}\,\mathrm d\mu(x)
\le \frac{1}{N} + \frac{2^{2\nu_2(r^{2}-1)+3}}{(\log N)^{1.005}} 
\le \frac{2^{2\nu_2(r^{2}-1)+4}}{(\log N)^{1.005}} .
\]
because for \(N\ge 3\) we also have \({1}/{N} \le 1/(\log N)^{1.005}\).
\end{proof}
\bigskip

\begin{proof}[Proof of Theorem~\ref{thm:1}]
Fix a sparse set $\S$ and the measure $\mu$
Fix a non‑zero integer $h=2^c u$ with $u$ odd.
By Weyl's criterion and the Davenport--Erdős--LeVeque lemma~\cite{LeVeque},
it suffices to show

\[
\sum_{N\geq 1}\frac{1}{N}\int_{\C}
\Bigl|\frac{1}{N}\sum_{n=1}^{N}e^{2\pi i r^n h x}\Bigr|^{2}\mathrm{d}\mu(x)<\infty .
\]
By Lemma~\ref{lem:L2-key} for every $N\ge\delta_7$ with $|h|\le\log\log N$,

\[
\int_{\C}\Bigl|\frac1N\sum_{j=1}^{N} e^{2\pi i r^{j} h x}\Bigr|^{2}\,\mathrm d\mu(x)
\le \frac{2^{\,2\nu_2(r^{2}-1) + 4}}{(\log N)^{1.005}} .
\]
Choose $N_0 = N_0(h)$ satisfying $N_0\ge\delta_7$,
the constant set in Lemma~\ref{lem:L2-key},
and $|h|\le\log\log N_0$; then the estimate holds for all $N\ge N_0$.  
Hence,

\[
\sum_{N\geq 1}\frac{1}{N}\int_{\C}\Bigl|\frac1N\sum_{j=1}^{N} e^{2\pi i r^{j} h x}\Bigr|^{2}\,\mathrm d\mu(x)
\le 
\sum_{N=1}^{N_0-1}\frac{1}{N} \;+\; 2^{\,2\nu_2(r^{2}-1) + 4}\!\!\sum_{N\geq N_0} \frac{1}{N(\log N)^{1.005}} < \infty ,
\]
because the second series converges by the integral test.  
Thus, the whole series converges, and by the Davenport--Erdős--LeVeque lemma,

\[
\lim_{N\to\infty}\frac{1}{N}\sum_{n=1}^{N} e^{2\pi i r^n h x} = 0,
\qquad\text{for $\mu$-almost every } x.
\]
Since this holds for every non‑zero integer $h$, a countable intersection of full $\mu$-measure sets shows that for $\mu$-almost every $x\in\C$ the sequence $(r^nx)_{n\ge1}$ is equidistributed modulo~$1$.
\end{proof}
\bigskip

\pagebreak

\section{Proof of \autoref{thm:2}}

The following lemma derives immediately from an example due to Bugeaud~\cite[p. 935]{Bugeaud-expansions}.

\begin{lemma}
\label{lem:ceros}
Let $\S$ be a sparse set, let \(b\) be an even integer 
and let   \(\alpha\) be a real number such that  \(0 < \alpha < \log_b 2\). Let \((n_i)_{i \ge 1}\) be a strictly increasing sequence of positive integers in $\S$ such that  
$n_i > ( n_{i-1})/\alpha$ for all sufficiently large  $i$.
Let $x = \sum_{i\geq 1} 2^{-n_i}$.  
Then, for every  $i$ large enough, among the first \(\lfloor \alpha n_i \rfloor\) digits of the base-\(b\) expansion of \(x\), there are at most \(n_{i-1}\) non‑zero digits.
\end{lemma}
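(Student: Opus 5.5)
Split $x$ at index $i-1$: write $x = y_i + z_i$ with $y_i = \sum_{j\le i-1} 2^{-n_j}$ and $z_i = \sum_{j\ge i} 2^{-n_j}$. Then show that the first $\lfloor \alpha n_i\rfloor$ base-$b$ digits of $x$ coincide with those of $y_i$. Finally, count the nonzero base-$b$ digits of $y_i$.

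\emph{Step 1: $y_i$ has a short base-$b$ expansion.} Because $b$ is even, $2\mid b$, so $2^{n_{i-1}}$ divides $b^{n_{i-1}}$. Hence $b^{n_{i-1}} y_i = \sum_{j\le i-1} 2^{n_{i-1}-n_j}(b/2)^{n_{i-1}}$ is an integer, and $y_i$ has a terminating base-$b$ expansion with at most $n_{i-1}$ digits after the point. In particular, among its first $\lfloor \alpha n_i\rfloor$ digits at most $n_{i-1}$ are nonzero. The hypothesis $n_i > n_{i-1}/\alpha$ gives $\alpha n_i > n_{i-1}$, so this window of digits is meaningful.

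\emph{Step 2: adding $z_i$ does not disturb those digits.} Set $M=\lfloor \alpha n_i\rfloor$. We have $0< z_i \le 2\cdot 2^{-n_i}$, since the $n_j$ are strictly increasing. The number $b^M y_i$ is an integer because $M\ge n_{i-1}$. Therefore $b^M x = b^M y_i + b^M z_i$, and the first $M$ digits of $x$ equal those of $y_i$ provided $b^M z_i<1$. This holds if $2^{1-n_i} b^{\alpha n_i}<1$, i.e.\ $\alpha n_i\log_2 b < n_i-1$, i.e.\ $n_i(1-\alpha\log_2 b)>1$. Since $\alpha<\log_b 2$, we have $1-\alpha\log_2 b>0$, so the inequality holds for all large $i$. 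We also use that the expansion convention (no tail of $b-1$'s) is respected: $x$ is not a $b$-adic rational, since it has infinitely many binary ones at sparse positions. Its digits are therefore determined by $\lfloor b^M x\rfloor = b^M y_i$ together with $b^M z_i\in(0,1)$.

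\emph{Conclusion and main obstacle.} The only delicate step is the carry check in Step 2, namely that the tail $z_i$ is strictly below one unit in the $M$-th base-$b$ place. Here $\alpha<\log_b 2$ and $i$ large are exactly what is needed. The rest is bookkeeping, and membership of the $n_i$ in $\S$ only ensures $x\in\C$.
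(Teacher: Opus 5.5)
Your proof is correct, and it takes a more direct route than the paper. The paper works summand by summand. Writing $2^{-n}=m^n/b^n$ with $m=b/2$, it places the nonzero base-$b$ digits of $2^{-n}$ in positions $\lceil cn\rceil,\dots,n$ (where $c=\log_b 2$), so there are at most $L(n)=\lfloor n(1-c)\rfloor+1$ of them. It notes that for $j\ge i$ this window starts beyond $\lfloor\alpha n_i\rfloor$. It then uses $n_{j-1}<\alpha n_j$ and a geometric series to get $\sum_{j<i}L(n_j)\le(1-\delta)n_{i-1}+O(i)\le n_{i-1}$ for large $i$, with $\delta=(c-\alpha)/(1-\alpha)$.

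You instead treat the head $y_i$ as a single fraction with denominator $b^{n_{i-1}}$. Its digits past position $n_{i-1}$ therefore vanish, and the bound $n_{i-1}$ is immediate. You then handle the whole tail at once via $b^M z_i<1$. That inequality is exactly what excludes carries into the first $M$ places, which the paper's per-term bookkeeping passes over. Carries among head terms are harmless, since the nonzero-digit count is subadditive under addition, but the paper does not say so. A carry produced by the infinite tail is not addressed at all, and your estimate fills that gap.

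The paper's count is sharper, giving roughly $(1-\delta)n_{i-1}$ nonzero digits, but that strength is not needed for the statement or for Theorem~\ref{thm:2}. Your version shows that the growth hypothesis only serves to give $\lfloor\alpha n_i\rfloor\ge n_{i-1}$; when that fails, the bound is trivial anyway.

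One quibble: your remark that $x$ is not a $b$-adic rational is unnecessary. With the no-tail-of-$(b-1)$'s convention, $b_k(x)=\lfloor b^kx\rfloor-b\lfloor b^{k-1}x\rfloor$ holds for every $x$, so $\lfloor b^Mx\rfloor=b^My_i$ already fixes the first $M$ digits. The remark is also loosely justified, since infinitely many binary ones does not rule out $b$-adic rationality ($1/3=0.2$ in base $6$). What would work is that the gaps $n_{i+1}-n_i$ tend to infinity, which makes $x$ irrational.
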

\begin{proof}
Set $c = \log_b 2$ and write $b = 2m$, 
 where $m = b/2$ is an integer. Note that $0 < \alpha < c < 1$ and $\log_b m = \log_b(b/2) = 1 - c$.
For any positive integer $n$, we have
\[
2^{-n} = \frac{m^n}{b^n}.
\]
The integer $m^n$ has at most
\(
L(n) = \lfloor n\log_b m \rfloor + 1 = \lfloor n(1-c) \rfloor + 1
\)
digits in base $b$, \[
b_{L(n)}\ldots b_1.
\]
Dividing  $m^n$ by $b^n$ shifts $m^n$  base-$b$ expansion   $n$ places to the right, 
\[
0. \overbrace{0 \ldots 0 }^{n-L(n)} b_{L(n)}\ldots \ldots b_1.
\]
So the base-$b$ expansion of $2^{-n}$  can have non-zero digits only at positions
\[
 n - L(n) + 1,\; n - L(n) + 2,\; \dots,\; n.
\]
The first  of these positions is
\[
n - \lfloor n(1-c) \rfloor = \lceil n c \rceil \ge  n c.
\]
Let $i_0$ be such that $n_i > n_{i-1}/\alpha$ for all $i \ge i_0$. We  prove the statement for all sufficiently large $i$; in particular  assume $i > i_0$. Then $n_{i-1} < \alpha n_i$, and since $n_{i-1}$ is an integer,
\[
n_{i-1} \le \lfloor \alpha n_i \rfloor.
\]
Now consider the base-$b$ expansion of $x = \sum_{j\geq 1} 2^{-n_j}$. We examine the contributions to the first $\lfloor \alpha n_i \rfloor$ digits.

\medskip
\noindent\textit{Terms with $j < i$:} For each such term, the non-zero digits of $2^{-n_j}$ lie at positions $\le n_j \le n_{i-1}$. Since $n_{i-1} \le \lfloor \alpha n_i \rfloor$, all these digits fall within the first $\lfloor \alpha n_i \rfloor$ positions. The total number of non-zero digits contributed by these terms is at most $\sum_{j=1}^{i-1} L(n_j)$.

\medskip
\noindent\textit{Terms with $j \ge i$:} The first non-zero digit of $2^{-n_j}$ is at position $\ge c n_j \ge c n_i$. Since $c > \alpha$, we have $c n_i > \alpha n_i$, so the first digit of any term with $j \ge i$ occurs strictly after position $\lfloor \alpha n_i \rfloor$. Hence these terms contribute no non-zero digits to the first $\lfloor \alpha n_i \rfloor$ positions.

\medskip
It remains to bound $\sum_{j=1}^{i-1} L(n_j)$. Using $L(n_j) \le n_j(1-c) + 1$, we obtain
\[
\sum_{j=1}^{i-1} L(n_j) \le (1-c)\sum_{j=1}^{i-1} n_j + (i-1).
\]
We now estimate $\sum_{j=1}^{i-1} n_j$. From the growth condition $n_k > n_{k-1}/\alpha$, which is equivalent to $n_{k-1} < \alpha n_k$, we have for all $k$ with $i_0 < k \le i-1$,
\[
n_{k-1} < \alpha n_k.
\]
Iterating this yields, for $i_0 \le j < i$,
\[
n_j < \alpha^{i-1-j} n_{i-1}.
\]
Split the sum into two parts:
\[
\sum_{j=1}^{i-1} n_j = \sum_{j=1}^{i_0-1} n_j + \sum_{j=i_0}^{i-1} n_j.
\]
The first sum is a constant $C$ (independent of $i$). For the second sum,
\[
\sum_{j=i_0}^{i-1} n_j < n_{i-1} \sum_{j=i_0}^{i-1} \alpha^{i-1-j} < n_{i-1} \sum_{k\geq 0} \alpha^k = \frac{n_{i-1}}{1-\alpha}.
\]
Thus
\[
\sum_{j=1}^{i-1} n_j < C + \frac{n_{i-1}}{1-\alpha}.
\]
Substituting back,
\[
\sum_{j=1}^{i-1} L(n_j) < (1-c)\left(C + \frac{n_{i-1}}{1-\alpha}\right) + (i-1).
\]
Set $\delta = \frac{c-\alpha}{1-\alpha} > 0$. Then
\[
(1-c)\frac{n_{i-1}}{1-\alpha} = \left(1 - \frac{c-\alpha}{1-\alpha}\right)n_{i-1} = n_{i-1} - \delta n_{i-1}.
\]
Hence
\[
\sum_{j=1}^{i-1} L(n_j) < n_{i-1} - \delta n_{i-1} + (1-c)C + (i-1).
\]
Since $\alpha < 1$, the condition $n_i > n_{i-1}/\alpha$ forces the sequence $(n_i)$ to grow at least geometrically, so $n_{i-1} \to \infty$ exponentially fast as $i \to \infty$. Consequently, the term $-\delta n_{i-1}$ dominates $(1-c)C + (i-1)$, and for all sufficiently large $i$ we have
\[
\delta n_{i-1} \ge (1-c)C + (i-1),
\]
which implies
\[
\sum_{j=1}^{i-1} L(n_j) < n_{i-1}.
\]
Since the left-hand side is an integer, we conclude
\[
\sum_{j=1}^{i-1} L(n_j) \le n_{i-1}.
\]
Therefore, for all sufficiently large $i$, the first $\lfloor \alpha n_i \rfloor$ digits of $x$ in base $b$ contain at most $n_{i-1}$ non-zero digits.
\end{proof}

\begin{proof}[Proof of \autoref{thm:2}]

By \autoref{lem:ceros} if we choose the sequence $(n_i)$	so that $\frac{n_i-1}{n_i} \to 0$ (possible because we may take it to grow arbitrarily fast), then the bound on non‑zero digits forces the frequency of zeros in the first\(\lfloor \alpha n_i \rfloor\) digits to tend to $1$. Then $x=\sum_{i\geq 1}2^{-n_i}$ is not normal in base $b$ nor in any other even base $b'< b$ because \(\log_b 2<\log_{b'} 2\).

 Finally, since \(\mathcal{S}\) is sparse, it contains uncountably many  strictly increasing sequences \((m_k)\) with \(m_{k+1} > (m_k)/\alpha\) and \(m_{k-1}/m_k\to 0\). For any infinite such subsequence \((m_{k_i})\) define \(x = \sum_{i\geq 1} 2^{-m_{k_i}}\in\mathcal{C}(\mathcal{S})\). 
 Since different subsequences give different numbers, we conclude that  \(\mathcal{C}(\mathcal{S})\) contains uncountably many numbers that are not normal in every even base \(\le b\).
\end{proof}

\section{Proof of Theorem~\ref{thm:3}}

The measure $\mu$ on the unit interval $[0,1]$ supported on
\[
{\mathcal C}({\mathcal S})= \left\{ \sum_{n \in S} b_n 2^{-n} : b_n \in \{0,1\} \right\}.
\]
For a dyadic interval $I=(a2^{-n},(a+1)2^{-n}) $
 such that the binary expansion of $a2^{-n} = 0.a_1\cdots a_n$, 
\[
\mu\bigl(I\bigr) =
\begin{cases}
2^{-S(1,n)}, & \text{if for all $i$ such that $1\leq i\leq n$ }, a_i = 1 \text{ implies } i \in S \\[4pt]
0, & \text{otherwise}.
\end{cases}
\]
Let $I=(0.a_1...a_n, 0.a_1...a_n +2^{-n})$
 be a positive-$\mu$-measure dyadic interval  in ${\mathcal C}({\mathcal S})$
and let 
$I^0=(0.a_1...a_n0, 0.a_1...a_n 1)$ and $I^1=(0.a_1...a_n1, 0.a_1...a_n1+2^{-(n+1)})$, 
the left and right halves of $I$ respectively.
Then,
\begin{align*}
\mu(I^0) &=
\begin{cases}
\mu(I), & n+1 \notin \S,\\[4pt]
\mu(I)/2, & n+1 \in \S,
\end{cases}
\\[4pt]
\mu(I^1) &=
\begin{cases}
0, & n+1 \notin \S,\\[4pt]
\mu(I)/2, & n+1 \in \S.
\end{cases}
\end{align*}
Therefore,
both halves have positive measure exactly when $ n+1 \in \S$.
If $n+1 \notin \S$, the half ending in $1$ has measure zero.

Any open set in the  Cantor-type set $\C$ 
can be written as a countable disjoint union of sets 
of the form $\C \cap I$, where $I$ is a dyadic interval.
Since $\mu$ is countably additive,
if $X\subseteq \C$ and  $\mu(X) > 0$ then at least one of those dyadic intervals must have positive $\mu$-measure.  Endpoints have $\mu$-measure zero because they are intersections of nested 
 dyadic intervals with measures tending to $0$; hence the result holds for all interval  types.

For  every integer $m\ge 1$ we  let $N_n$ denote the number of terms to be considered in  the exponential sum and set
\[
N_m = \bigl\lfloor 2^{m^{\alpha}} \bigr\rfloor,
\]
where
 $\alpha = 0.999$ (actually any fixed $\alpha$ such that ${1}/{1.005}<\alpha<1$ works). 
So, $N_m$ is subexponential in $m$,
$\log N_m $ = 
$ m^{\alpha}\log 2 + O(1)$, and $\log\log N_m = \log(m^{\alpha}\log 2 + O(1)) \le 2\log m$ for sufficiently large $m$.

\begin{lemma} 
\label{lem:L2-block}
For every odd \(r\ge 3\), for every \(m\ge \delta_9(r)\) and every non‑zero
integer \(h\) with \(|h|\le\log\log N_{m}\),
\[
\int_{\C}\Bigl|\frac1{N_{m}}\sum_{j=1}^{N_{m}} e^{2\pi i r^{\,j} h x}\Bigr|^{2}\,\mathrm d\mu(x)
\le \frac{C_{r}}{m^{1.003}} .
\]
where \(\delta_9(r)= \max\{\big( \log_2 \delta_7(r) \big)^{\!{1}/{\alpha}},3\}\) and \(C_{r} = 2^{\,2\nu_2(r^{2}-1) + 4} / (\frac12 \log 2)^{1.005}\).
\end{lemma}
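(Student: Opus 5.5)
The plan is to apply Lemma~\ref{lem:L2-key} with $N=N_m$ and then convert the bound in terms of $\log N_m$ into a bound in terms of $m$. Here $\delta_7=\delta_7(r)$ denotes the threshold from Lemma~\ref{lem:L2-key} for the given odd $r$.

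First we check that $N_m\ge\delta_7(r)$ whenever $m\ge\delta_9(r)$. Since $m\ge(\log_2\delta_7)^{1/\alpha}$, we have $m^\alpha\ge\log_2\delta_7$. Hence $2^{m^\alpha}\ge\delta_7$, and because $\delta_7$ is an integer, $N_m=\lfloor 2^{m^\alpha}\rfloor\ge\delta_7$. The hypothesis $|h|\le\log\log N_m$ is exactly the one required in Lemma~\ref{lem:L2-key}. That lemma therefore gives
\[
\int_{\C}\Bigl|\frac1{N_m}\sum_{j=1}^{N_m} e^{2\pi i r^{j}hx}\Bigr|^2\,\mathrm d\mu(x)\le \frac{2^{2\nu_2(r^2-1)+4}}{(\log N_m)^{1.005}}.
\]

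Next we need a lower bound on $\log N_m$. We have $N_m\ge 2^{m^\alpha}-1\ge 2^{m^\alpha-1}$ as soon as $2^{m^\alpha}\ge2$, which holds for $m\ge1$. Thus
\[
\log N_m\ge (m^\alpha-1)\log 2\ge \tfrac12 m^\alpha\log 2 \qquad\text{whenever } m^\alpha\ge 2.
\]
The condition $m^\alpha\ge2$ holds for $m\ge3$, since $3^{0.999}>2$; this explains the $3$ in the definition of $\delta_9$. It follows that
\[
(\log N_m)^{1.005}\ge(\tfrac12\log2)^{1.005}\,m^{1.005\alpha}.
\]
Finally, $1.005\alpha=1.005\cdot0.999=1.003995\ge1.003$, so $m^{1.005\alpha}\ge m^{1.003}$ because $m\ge1$. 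Combining these inequalities gives the bound $C_r/m^{1.003}$ with $C_r=2^{2\nu_2(r^2-1)+4}/(\tfrac12\log 2)^{1.005}$.

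The only delicate point is the elementary lower bound on $\log N_m$, namely handling the floor and the $-1$ without losing more than the factor $\tfrac12$ built into $C_r$. The argument above does this once $m^\alpha\ge2$. Everything else is a direct substitution into Lemma~\ref{lem:L2-key}.
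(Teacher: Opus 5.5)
Your proposal is correct and matches the paper's proof: you apply Lemma~\ref{lem:L2-key} at $N=N_m$, which is justified because $m\ge\delta_9(r)$ gives $N_m\ge\delta_7(r)$. You then use $\log N_m\ge (m^{\alpha}-1)\log 2\ge \tfrac12 m^{\alpha}\log 2$ for $m\ge 3$, together with $1.005\alpha>1.003$, to reach $C_r/m^{1.003}$.
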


\begin{proof}
Let \(\delta_9(r)= \max\{\big( \log_2 \delta_7(r) \big)^{\!{1}/{\alpha}},3\}\).
Recall \(N_{m} = \lfloor 2^{m^{\alpha}}\rfloor\) with \(\alpha = 0.999\).
 Since \(m\ge \delta_9(r)\) then
\(N_{m} \ge \delta_7(r)\). By Lemma~\ref{lem:L2-key} with \(N = N_{m}\),
\[
\int_{\C}\Bigl|\frac1{N_{m}}\sum_{j=1}^{N_{m}} e^{2\pi i r^{\,j} h x}\Bigr|^{2}\,\mathrm d\mu(x)
\le \frac{2^{\,2\nu_2(r^{2}-1) + 4}}{(\log N_{m})^{1.005}} .
\]
Since \(N_{m} \ge 2^{m^{\alpha}}-1\) and for \(m\ge 3\)
we have \(2^{m^{\alpha}} \ge 2^{3^{0.999}} > 8\), it follows that for
all \(m\ge 3\),
\[
\log N_{m} \ge m^{\alpha}\log 2 - \log 2 \ge \frac12 m^{\alpha}\log 2.
\]
Therefore, for \(m\ge \delta_9(r)\ge3\),
\[
\frac{2^{\,2\nu_2(r^{2}-1) + 4}}{(\log N_{m})^{1.005}}
\le \frac{2^{\,2\nu_2(r^{2}-1) + 4}}{(\frac12 \log 2)^{1.005}}\,
   \frac{1}{m^{1.005\alpha}}
\le \frac{C_{r}}{m^{1.003}} ,
\]
where \(C_{r} = 2^{\,2\nu_2(r^{2}-1) + 4} / (\frac12 \log 2)^{1.005}\) and we used
\(1.005\alpha = 1.003995 > 1.003\) to obtain for all \(m\ge 1\),  $1/m^{1.005\alpha} \le 1/m^{1.003}$.
\end{proof}

Define  for each non-zero-integer $h$, odd $r\ge 3$ and integer $m\ge 1$,
\begin{align*}
f_{h,r,m}(x)&=\frac1{N_m}\sum_{j=1}^{N_m} e^{2\pi i r^{\,j} h x},
\\
\Lambda_{r,m}&= \bigcup_{|h|=1}^{ \log\log N_m}
\Bigl\{ x\in \C:
\Bigl|f_{h,r,m}(x)\Bigr|
> \frac1{\log\log N_m} \Bigr\}.
\end{align*}

\begin{remark}
\label{rem:1}
Each $\Lambda_{r,m}$ is a finite union of  intervals. 
For each $h,r,m$, the boundary condition \(|f_{h,r,m}(x)| = 1/\log\log N_m\) is equivalent to a polynomial equation in \(\cos(2\pi x)\) with rational coefficients; hence the endpoints are algebraic numbers and computable.
so the endpoints are computable; consequently, $\mu(\Lambda_{r,m})$ is   computable uniformly in $r,m$.
\end{remark}

\begin{lemma}\label{lem:measure-Lambda}
For all \(m\ge \delta_9(r)\),
\[
\mu(\Lambda_{r,m}) \le C'_r\frac{(\log m)^{3}}{m^{1.003}} .
\]
where \(C'_r=16C_r\) with $C_r$ and $\delta_9(r)$ determined in \autoref{lem:L2-block}.
\end{lemma}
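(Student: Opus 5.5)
The plan is a union bound over $h$ combined with Chebyshev's (Markov's) inequality applied to $|f_{h,r,m}|^2$, with Lemma~\ref{lem:L2-block} supplying the second-moment bound.

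Fix $m\ge\delta_9(r)$ and set $T=\log\log N_m$. By definition $\Lambda_{r,m}$ is a union over the non-zero integers $h$ with $1\le |h|\le T$. There are at most $2T$ such $h$, counting both signs; if needed one can also note that $|f_{-h,r,m}|=|f_{h,r,m}|$, since $f_{-h,r,m}=\overline{f_{h,r,m}}$. Subadditivity therefore gives
\[
\mu(\Lambda_{r,m})\le \sum_{1\le |h|\le T}\mu\Bigl\{x\in\C: |f_{h,r,m}(x)|>\tfrac1T\Bigr\}.
\]

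For each such $h$, Markov's inequality applied to $|f_{h,r,m}|^2$ gives
\[
\mu\bigl\{|f_{h,r,m}|>1/T\bigr\}\le T^2\int_{\C}|f_{h,r,m}|^2\,\mathrm d\mu .
\]
The hypotheses of Lemma~\ref{lem:L2-block} hold here, because $m\ge\delta_9(r)$ and $|h|\le\log\log N_m$. That lemma bounds the integral by $C_r/m^{1.003}$, so each term is at most $T^2C_r/m^{1.003}$. Summing over the at most $2T$ values of $h$ yields
\[
\mu(\Lambda_{r,m})\le 2T^3\,\frac{C_r}{m^{1.003}}.
\]

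It remains to bound $T=\log\log N_m$ by a multiple of $\log m$. Since $N_m\le 2^{m^\alpha}$, we get $\log N_m\le m^\alpha\log 2<m$, so $T\le\log m$ whenever $\log\log N_m$ is positive. In that case $2T^3\le 2(\log m)^3\le 16(\log m)^3$, which gives the stated constant $C'_r=16C_r$ with room to spare. The paper's remark that $\log\log N_m\le 2\log m$ would also suffice, since $2\cdot 2^3=16$, which is presumably where the $16$ comes from. If $\log\log N_m<1$, there is no admissible $h$, so $\Lambda_{r,m}=\emptyset$ and the bound is trivial.

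The argument is routine. The only step needing care is this last one: checking that $\log\log N_m\le 2\log m$ (or $\le\log m$) holds for all $m\ge\delta_9(r)\ge 3$, and not just for $m$ large enough. This follows directly from $N_m\le 2^{m^{\alpha}}$, which gives $\log N_m\le m^{\alpha}\log 2\le m$.
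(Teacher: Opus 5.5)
Your proof is correct and follows the paper's argument: Markov/Chebyshev applied to $|f_{h,r,m}|^2$ with threshold $1/\log\log N_m$, the second-moment bound from Lemma~\ref{lem:L2-block}, a union bound over at most $2\log\log N_m$ values of $h$, and then $\log\log N_m\le 2\log m$ to reach the constant $16C_r$. Two small refinements go beyond the paper. You observe the sharper bound $\log\log N_m\le\log m$, and you handle the case $\log\log N_m<1$ (where $\Lambda_{r,m}=\emptyset$) explicitly, which the paper leaves implicit.
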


\begin{proof}
By the generalized Markov/Chebyshev inequality,
\[
 \mu (\{x\in X\,:\,\, |f(x)| > T\})\ \ < \frac{1}{T^2} \int _{X}|f(x)|^{2}\,d\mu(x). 
\]
Let $T=1/(\log \log N_m)$.
By Lemma~\ref{lem:L2-block}, for all \(m\ge \delta_9(r)\ge 3\) and
every non‑zero integer \(h\) with \(|h|\le\log\log N_m\),
\[
\int_{\C}|f_{h,r,m}(x)|^{2}\,\mathrm d\mu(x) \le C_r/m^{1.003}.
\]
Since the set \(\Lambda_{r,m}\) is the union over at most \(2\log\log N_m\)
integers \(h\) with \(1\le|h|\le\log\log N_m\),
\begin{align*}
\mu(\Lambda_{r,m})
&\le \frac{1}{(\log\log N_m)^{2}}
\sum_{|h|=1}^{\log\log N_m}
\int_{\C}|f_{h,r,m}(x)|^{2}\,\mathrm d\mu(x)
\\
&\le (\log\log N_m)^{2} \cdot 
2\log\log N_m \cdot \frac{C_r}{m^{1.003}}
= 2C_r\,\frac{(\log\log N_m)^{3}}{m^{1.003}} 
\\
&\leq 2C_r\cdot 8\,\frac{(\log m)^{3}}{m^{1.003}}
= 16\,C_r\,\frac{(\log m)^{3}}{m^{1.003}} .
\end{align*}
The last inequality follows 
for \(m\ge 3\) because
\(N_m \le 2^{m^{\alpha}}\) with \(\alpha<1\), so
\(\log N_m 
\le m\log 2\),
hence \(\log\log N_m 
\le  2\log m\).
%
\end{proof}

\begin{remark}\label{rem:2}
The series $\sum_{m\geq 1} (\log m)^{3}/m^{1.003}$ converges (its terms are $O(1/m^{1.003})$).  Its tail can be bounded effectively, for instance by comparing with the integral $\int (\log x)^{3}/x^{1.003}\,dx$, which is a computable decreasing function of $m$ that can be approximated from above by rationals to any desired precision.
\end{remark}

Let $\varepsilon$ a rational , $0<\varepsilon<2/3$, which is an input of  Algorithm~\ref{alg:1} and remains fixed.
For each base odd base $r$ we define $m_0(r)$ as the least integer such that 
\[
\sum_{m\geq m_0(r)} C'_r\frac{(\log m)^{3}}{m^{1.003}} < \frac{\varepsilon}{2^{r}}.
\]
where $C_r'$ is determined in Lemma~\ref{lem:measure-Lambda}.
We define

\begin{align*}
\Delta &= \bigcup_{\substack{r\ge3\\ r\text{ odd}}}\;
        \bigcup_{m\geq m_0(r)} \Lambda_{r,m},
\\s &= \sum_{\substack{r\ge3\\ r\text{ odd}}}\;
      \sum_{m\geq m_0(r)} \mu(\Lambda_{r,m}).
\end{align*}
Then,
\[
\mu(\Delta) \le s < \sum_{\substack{r\geq 3\\ r\text{ odd}}}\frac{\varepsilon}{2^{r}} = \frac{\varepsilon}{6} < \varepsilon .
\]
For an integer $t\ge 3$, define the approximating set $\Delta_t$, the partial sum $s_t$ and the tail $u_t$,
\begin{align*}
\Delta_t &= \bigcup_{\substack{r=3\\ r\text{ odd}}}^{\lfloor \log t\rfloor}\;
          \bigcup_{m=m_0(r)}^{t} \Lambda_{r,m},
\\
s_t &= \sum_{\substack{r=3\\ r\text{ odd}}}^{\lfloor \log t\rfloor}\;
      \sum_{m=m_0(r)}^{t} \mu(\Lambda_{r,m}),
\\
u_t&  = s-s_t.
\end{align*}
Clearly, for every $t\geq 3$, $\mu(\Delta_t)\leq s_t$ and 
 $u_t>0$ with   $\lim_{t\to\infty}u_t = 0$.

\begin{remark}\label{rem:3}
Let's see that $u_t$ is computable.
Since the computable numbers form a field, any finite sum of computable numbers is computable.
Each $\mu(\Lambda_{r,m})$ is computable, so for each $t$, $s_t$ is computable.
The sequence $(s_t)_{t\ge 3}$ is nondecreasing and bounded above by $s$,
and $s = \lim_{t\to\infty} s_t = \sup_t s_t$.
A standard theorem in computable analysis states that the supremum of
a computable, increasing, bounded sequence of computable reals is
computable exactly when there
exists a computable function $\ell \mapsto t_\ell$ such that
$s - s_{t_\ell} < 2^{-\ell}$ for all~$\ell$.
For each $t \ge 3$,
\[
u_t = s - s_t
    = \Big(\sum_{\substack{r > {\lfloor \log t\rfloor} \\ r\ \text{odd}}} \;
      \sum_{m\geq m_0(r)} \mu(\Lambda_{r,m})\Big)
    + \Big(\sum_{\substack{r=3 \\ r\ \text{odd}}}^{\lfloor \log t\rfloor} \;
      \sum_{m > \max(m_0(r),t)}\mu(\Lambda_{r,m})\Big).
\]
The first double sum is  bounded by the tail of the convergent geometric
series. The exponential decay in 
$r$ converts the logarithmic threshold 
$r>\log t$ into $1/t$ decay,
\[
\sum_{\substack{r > {\lfloor \log t\rfloor} \\ r\ \text{odd}}} \frac{\varepsilon}{2^r}
= \frac{2\varepsilon}{3} \cdot 2^{-{\lfloor \log t\rfloor}}<(4\varepsilon/3t^{\log 2}).
\]
The second double sum decays  to zero polynomially with a very small exponent, 
\begin{align*}
\sum_{\substack{r=3 \\ r\ \text{odd}}}^{\lfloor \log t\rfloor} \sum_{m>\max\{m_0(r),t\}} \mu(\Lambda_{r,m})
&\leq 
\sum_{\substack{r=3 \\ r\ \text{odd}}}^{\lfloor \log t\rfloor} C'_r\sum_{m>\max\{m_0(r),t\}} \frac{(\log m)^3}{m^{1.003}} 
\\&
\leq 
K \frac{(\log  t)^3}{t^{0.003}} 
\sum_{\substack{r=3 \\ r\ \text{odd}}}^{\lfloor \log t\rfloor} 
C'_r \\
&\leq K'\frac{(\log t)^{8}}{t^{0.003}}
\end{align*}
where 
$C'_{r} =  16 2^{\,2\nu_2(r^{2}-1) + 4} / (\frac12 \log 2)^{1.005}$ and
  $K' $ is a positive constant.
Hence we can effectively
choose $t_\ell$ such that $u_{t_\ell} < 2^{-\ell}$, providing the required
computable modulus of convergence.  
Therefore $s$ is computable.
Finally, for each fixed $t$, we have
$u_t = s - s_t$.
Since
the computable  numbers are a field and both $s$ and $s_t$ are computable reals,  their difference
$u_t$ is computable.
\end{remark}

\begin{lemma}\label{lem:props}
For any interval $I$ and any integers $t,\ell$ with $3\leq t\le \ell$,
\begin{enumerate}
\item $\mu(\Delta\setminus\Delta_t) \le u_t$.\label{item:1}
\item $\mu(\Delta_\ell\setminus\Delta_t) \le u_t - u_\ell$.\label{item:2}
\item $\mu(\Delta\cap I) \le \mu(\Delta_t\cap I) + u_t$.\label{item:3}
\item $\mu(\Delta_\ell\cap I) \le \mu(\Delta_t\cap I) + u_t - u_\ell$.\label{item:4}
\end{enumerate}
\end{lemma}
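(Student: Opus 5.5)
The plan is to reduce all four items to one observation: every set difference involved is covered by the sets $\Lambda_{r,m}$ whose indices lie in $\Delta$'s index set but not in $\Delta_t$'s. The measure of such a difference is then bounded by the corresponding part of the series defining $s$. Write $J=\{(r,m): r\ge 3 \text{ odd},\ m\ge m_0(r)\}$ and $J_t=\{(r,m)\in J: r\le\lfloor\log t\rfloor,\ m\le t\}$. Then $\Delta=\bigcup_{J}\Lambda_{r,m}$, $\Delta_t=\bigcup_{J_t}\Lambda_{r,m}$, $s=\sum_J\mu(\Lambda_{r,m})$ and $s_t=\sum_{J_t}\mu(\Lambda_{r,m})$. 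The key monotonicity fact is that $t\le \ell$ implies $J_t\subseteq J_\ell\subseteq J$, because both the bound $\lfloor\log t\rfloor$ on $r$ and the bound $t$ on $m$ are nondecreasing in $t$.

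For item~\ref{item:1}, observe that $\Delta\setminus\Delta_t\subseteq\bigcup_{J\setminus J_t}\Lambda_{r,m}$. Countable subadditivity of $\mu$ then gives
\[
\mu(\Delta\setminus\Delta_t)\le\sum_{J\setminus J_t}\mu(\Lambda_{r,m})=s-s_t=u_t.
\]
The series may be split this way because all terms are nonnegative and $s<\infty$. For item~\ref{item:2}, similarly $\Delta_\ell\setminus\Delta_t\subseteq\bigcup_{J_\ell\setminus J_t}\Lambda_{r,m}$, so
\[
\mu(\Delta_\ell\setminus\Delta_t)\le s_\ell-s_t=(s-s_t)-(s-s_\ell)=u_t-u_\ell.
\]

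Items~\ref{item:3} and~\ref{item:4} follow by intersecting with $I$. From $\Delta\cap I\subseteq(\Delta_t\cap I)\cup(\Delta\setminus\Delta_t)$ and subadditivity, together with item~\ref{item:1}, we get $\mu(\Delta\cap I)\le\mu(\Delta_t\cap I)+u_t$. Likewise $\Delta_\ell\cap I\subseteq(\Delta_t\cap I)\cup(\Delta_\ell\setminus\Delta_t)$, and item~\ref{item:2} gives item~\ref{item:4}.

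No step is a real obstacle. The one point needing care is the inclusion $J_t\subseteq J_\ell$, which is what makes $s_\ell-s_t$ exactly the sum over $J_\ell\setminus J_t$. A second point is that for small $t$ with $\lfloor\log t\rfloor<3$ the set $J_t$ may be empty, so $\Delta_t=\emptyset$; the inequalities still hold with $s_t=0$. The argument needs only measurability of the $\Lambda_{r,m}$, which are finite unions of intervals by Remark~\ref{rem:1}.
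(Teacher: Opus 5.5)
Your proof is correct and is the argument the paper has in mind. The paper's proof is the one-line remark that all four items follow from the definitions of $\Delta,\Delta_t,s_t,u_t$ and subadditivity; you make the index-set bookkeeping ($J_t\subseteq J_\ell\subseteq J$, so that $s-s_t$ and $s_\ell-s_t$ are sums over the complementary index sets) explicit, and your use of $s<\infty$ is the same assumption the paper makes through its bound $\mu(\Delta)\le s<\varepsilon/6$.
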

\begin{proof}
These follow directly from the definitions of $\Delta,\Delta_t$, $s_t$, $u_t$ and  subadditivity.
\end{proof}
\medskip

We now give the algorithm that proves  Theorem \ref{thm:3}.

\needspace{16\baselineskip}
\begin{algorithm}  \label{alg:1} \em 
Let $k_1 < k_2 < k_3< \cdots$ be the elements of $\S$ in increasing order.
Given positive rational $\varepsilon<2/3$, the following  algorithm defines the binary expansion $x =x(\varepsilon)= 0.b_1 b_2 b_3\ldots$ digit by digit.

\begin{itemize}[ itemindent=-1cm]
\item[]{\em Initialisation $i=0$.}
For $j=1,\ldots, k_1-1$ set $b_j=0$.
Set $I_0 = [0,2^{-(k_1-1)})$, so   $|I_0|=2^{-(k_1-1)}$ and $\mu(I_0)=1$.
Choose $p_0$ large enough so that $u_{p_0} < \varepsilon$. 

\item[]{\em Step $i\ge 1$.}
For $j=k_{i-1}+1,..,k_i-1$ 
set $b_j=0$.
The current  interval is  $I_{i-1}$, with $|I_{i-1}|=2^{-(k_{i}-1)}$ and $\mu(I_{i-1}) = 2^{-(i-1)}$.

Split $I_{i-1}$ into its left half $I^0_{i-1}$ and its right half $I^1_{i-1}$.
Since $k_i \in \S$, 
$\mu(I^0_{i-1})=
 \mu(I^1_{i-1})=2^{-i}$.

Choose $p_i \ge p_{i-1}$ such that $u_{p_i} <  2^{-2i}\varepsilon$.  

Set 

$
b_{k_i} = \begin{cases}
0 & \text{if } \measure{\Delta_{p_i} \cap I^0_{i-1}}+u_{p_i}  < 2^{-i} (\varepsilon + \sum_{j=1}^{i} 2^{\,j-1} u_{p_j}), \text{is verified first,}\\
1 & \text{if } \measure{\Delta_{p_i} \cap I^1_{i-1}}+u_{p_i}  < 2^{-i} (\varepsilon + \sum_{j=1}^{i} 2^{\,j-1} u_{p_j}), \text{is verified first.}
\end{cases}
$

Set  $I_i=I_{i-1}^{b_{k_i}}$.
\end{itemize}
\end{algorithm}
\bigskip

The following lemma proves the invariant of the algorithm: at every step $i$, the  interval $I_i$ contains a positive $\mu$-measure of numbers that are not in $\Delta$.

\begin{lemma}\label{lemma:positive}
For every $i\geq 0$, $\mu(I_i\setminus \Delta)>0$.
\end{lemma}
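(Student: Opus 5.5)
The plan is to prove by induction on $i$ a quantitative invariant that is stronger than the lemma:
\[
\mu(\Delta_{p_i}\cap I_i) + u_{p_i} < 2^{-i}\Bigl(\varepsilon + \sum_{j=1}^{i} 2^{j-1}u_{p_j}\Bigr) =: 2^{-i}E_i ,
\]
together with $E_i<1$. Given this, Lemma~\ref{lem:props}, item~\ref{item:3} applied with $I=I_i$ and $t=p_i$ yields
\[
\mu(\Delta\cap I_i)\le \mu(\Delta_{p_i}\cap I_i)+u_{p_i} < 2^{-i}E_i < 2^{-i}=\mu(I_i),
\]
so $\mu(I_i\setminus\Delta)>0$. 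To see $E_i<1$, use $u_{p_j}<2^{-2j}\varepsilon$. Then
\[
\sum_{j\ge1}2^{j-1}u_{p_j}<\varepsilon\sum_{j\ge1}2^{-j-1}=\varepsilon/2,
\]
so $E_i<3\varepsilon/2<1$ because $\varepsilon<2/3$. This is exactly where the hypothesis on $\varepsilon$ is used.

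For the base case $i=0$, the invariant reads $\mu(\Delta_{p_0}\cap I_0)+u_{p_0}<\varepsilon$. Since $\mu$ is concentrated on $I_0$, the left side is at most $s_{p_0}+u_{p_0}=s$. We have $s<\varepsilon/6$ by the choice of the $m_0(r)$, which gives the base case. The same estimate also shows that the strict form of the invariant is the right one to carry.

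For the inductive step, assume the invariant holds at $i-1$. The main work is to show that at least one of the two test conditions in step $i$ holds, so that $b_{k_i}$ is well defined. Suppose both fail. Adding the two failing inequalities gives
\[
\mu(\Delta_{p_i}\cap I_{i-1}) + 2u_{p_i} \ge 2^{-i}\cdot 2E_i = 2^{-(i-1)}E_i,
\]
using that $I^0_{i-1}$ and $I^1_{i-1}$ partition $I_{i-1}$ up to $\mu$-null endpoints. On the other hand, Lemma~\ref{lem:props}, item~\ref{item:4} with $t=p_{i-1}$, $\ell=p_i$, $I=I_{i-1}$ together with the inductive hypothesis gives
\[
\mu(\Delta_{p_i}\cap I_{i-1}) \le \mu(\Delta_{p_{i-1}}\cap I_{i-1}) + u_{p_{i-1}} - u_{p_i} < 2^{-(i-1)}E_{i-1} - u_{p_i}.
\]
Therefore $\mu(\Delta_{p_i}\cap I_{i-1})+2u_{p_i} < 2^{-(i-1)}E_{i-1}+u_{p_i}$. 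Since $2^{-(i-1)}E_i = 2^{-(i-1)}E_{i-1} + 2^{-(i-1)}2^{i-1}u_{p_i} = 2^{-(i-1)}E_{i-1}+u_{p_i}$, this contradicts the sum of the failing inequalities. This bookkeeping is exactly why the weights $2^{j-1}$ appear in the threshold. Whichever test succeeds, the chosen half $I_i$ satisfies the invariant at level $i$ verbatim.

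A side point is that the algorithm tests the conditions sequentially. Because the quantities involved are computable reals (Remarks~\ref{rem:1} and~\ref{rem:3}), a strict inequality that is true is eventually confirmed, so running both verifications in parallel terminates. The main obstacle is the step above showing that one half must pass the test, since one has to match the increment $u_{p_{i-1}}-u_{p_i}$ from item~\ref{item:4} against the growth of the threshold from $E_{i-1}$ to $E_i$. I would settle it by first checking the inequality $\mu(\Delta_{p_i}\cap I_{i-1})+2u_{p_i}<2^{-(i-1)}E_i$ directly. Also, the case $i=1$ uses the initial $p_0$, so item~\ref{item:4} requires $p_1\ge p_0$, which the algorithm guarantees.
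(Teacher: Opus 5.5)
Your proposal is correct and follows essentially the same route as the paper. You induct on the same invariant $\mu(\Delta_{p_i}\cap I_i)+u_{p_i}<2^{-i}T_i$ with base case $s_{p_0}+u_{p_0}=s<\varepsilon$. The inductive step is the same contradiction argument via item~\ref{item:4} of Lemma~\ref{lem:props}. You finish as the paper does, with item~\ref{item:3} and $T_i<3\varepsilon/2<1$.
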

\begin{proof}
We prove the lemma by induction on the number of steps $i$.
Let 
\[T_i = \varepsilon + \sum_{j=1}^{i} 2^{\,j-1} u_{p_j}.
\]
\noindent Induction hypothesis (IH):
For every $i\ge 0$,
\[
\measure{\Delta_{p_i} \cap I_i} + u_{p_i} < \frac{1}{2^{i}}\,T_i.
\]

\noindent\textit{Base case $i=0$.}
Since $\mu(\Delta_{p_0}) \le s_{p_0}$, by subadditivity,
Hence
\[
\measure{\Delta_{p_0}\cap I_0} + u_{p_0} \le s_{p_0} + u_{p_0} = s < \varepsilon = T_0 .
\]
\textit{Inductive step.}
Assume (IH) for $i-1$.
Since $I^0_{i-1}\cup I^1_{i-1} = I_{i-1}$,
\[
\measure{\Delta_{p_i} \cap I^0_{i-1}} + \measure{\Delta_{p_i} \cap I^1_{i-1}} = \measure{\Delta_{p_i} \cap I_{i-1}} .
\]
Apply Lemma~\ref{lem:props}  point (\ref{item:4}) with $t = p_{i-1}$, $\ell = p_i$, $I = I_{i-1}$:
\[
\measure{\Delta_{p_i} \cap I_{i-1}} \le \measure{\Delta_{p_{i-1}} \cap I_{i-1}} + u_{p_{i-1}} - u_{p_i}.
\]
Adding $2u_{p_i}$,
\[
(\measure{\Delta_{p_i} \cap I^0_{i-1}} + u_{p_i}) + (\measure{\Delta_{p_i} \cap I^1_{i-1}} + u_{p_i})
\le \measure{\Delta_{p_{i-1}} \cap I_{i-1}} + u_{p_{i-1}} + u_{p_i}.
\]
By (IH) for $i-1$,
\[
\measure{\Delta_{p_{i-1}} \cap I_{i-1}} + u_{p_{i-1}} < \frac{1}{2^{\,i-1}} T_{i-1}.
\]
%
Hence
\[
(\measure{\Delta_{p_i} \cap I^0_{i-1}} + u_{p_i}) + 
(\measure{\Delta_{p_i} \cap I^1_{i-1}} + u_{p_i})
 < \frac{1}{2^{\,i-1}} T_{i-1} + u_{p_i}.
\]
Hence, if both, 
$\big(\measure{\Delta_{p_i} \cap I^0_{i-1}} +u_{p_i} \big)$
and 
$\big(\measure{\Delta_{p_i} \cap I^1_{i-1}} +u_{p_i} \big)
$
were greater than or equal to 
\[
\frac{1}{2} \Big(\frac{1}{2^{\,i-1}} T_{i-1} +u_{p_i}\Big) = \frac{1}{2^{i}} T_{i}
\]
we would have 
\[
(\measure{\Delta_{p_i} \cap I^0_{i-1}} + u_{p_i}) + 
(\measure{\Delta_{p_i} \cap I^1_{i-1}} + u_{p_i})
 \ge2\frac{1}{2^{i}} T_{i}=\frac{1}{2^{\,i-1}} T_{i-1} +u_{p_i}
\]
contradicting the inequality above.
Therefore, at least one of them is less than~$\frac{1}{2^{i}} T_i$.
The  interval  $I_i=I^{b_{k_i}}_{i-1}$ chosen by the algorithm  satisfies
\[
\measure{\Delta_{p_i} \cap I_i} + u_{p_i} < \frac{1}{2^{i}} T_i .
\]
Since $u_{p_i}< 2^{-2i}\varepsilon$,
\[
\sum_{j\geq 1} 2^{\,j-1} u_{p_j}
<\varepsilon \sum_{j\geq 1} 2^{-j-1} = \frac{\varepsilon}{2}.
\]
Thus, $T_i \le \varepsilon + \varepsilon/2 = (3/2)\varepsilon $ for all $i$.
Since we assumed $\varepsilon < 2/3$, from (IH) and Lemma~\ref{lem:props} point~(\ref{item:3}),
\[
\measure{\Delta \cap I_i} \le \measure{\Delta_{p_i} \cap I_i} + u_{p_i}
< \frac{\varepsilon ~3/2}{2^{i}} < 2^{-i} = \mu(I_i).
\]
We conclude that no $I_i$ is completely covered by $\Delta$.
\end{proof}

\begin{lemma}
Let positive rational $\varepsilon < 2/3$. The number $x=x(\varepsilon)$ defined by the algorithm  is normal to all odd bases.
\end{lemma}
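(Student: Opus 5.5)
The plan is to show first that $x\notin\Delta$, and then that every point of $\C$ outside $\Delta$ is normal in all odd bases.

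\emph{Step 1: $x\notin\Delta$.} By construction $x$ lies in every closed interval $\overline{I_i}$, and $|I_i|=2^{-(k_{i+1}-1)}\to0$. Suppose $x\in\Delta$. Then $x\in\Lambda_{r,m}$ for some odd $r$ and some $m\ge m_0(r)$. By Remark~\ref{rem:1}, $\Lambda_{r,m}$ is a finite union of intervals, and the defining condition $|f_{h,r,m}(x)|>1/\log\log N_m$ is a strict inequality for a continuous function. Hence $\Lambda_{r,m}$ is relatively open, so there is $\eta>0$ with $(x-\eta,x+\eta)\cap\C\subseteq\Lambda_{r,m}$.

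Choose $i$ with $|I_i|<\eta$. If $x$ lies in the open interval $I_i$, then $I_i\subseteq(x-\eta,x+\eta)$. Thus $\mu(I_i\setminus\Delta)\le\mu(I_i\setminus\Lambda_{r,m})=0$, which contradicts Lemma~\ref{lemma:positive}.

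The boundary case needs separate care. Here $x$ is an endpoint of $I_i$, which happens when the digits $b_k$ eventually become constant. Since $x$ is an endpoint, $I_i$ is still contained in $(x-\eta,x+\eta)$ once $2|I_i|<\eta$, so the same argument applies. I therefore expect no real obstacle at this step, provided Remark~\ref{rem:1} is used carefully to justify openness.

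\emph{Step 2: normality along the subsequence $N_m$.} Fix an odd $r\ge3$ and a nonzero integer $h$. Since $x\notin\Delta$, for every $m\ge m_0(r)$ with $|h|\le\log\log N_m$ we have
\[
\Bigl|\frac1{N_m}\sum_{j=1}^{N_m}e^{2\pi i r^j h x}\Bigr|\le\frac1{\log\log N_m}\longrightarrow0 .
\]
So the Weyl sums vanish along the subsequence $N_m=\lfloor 2^{m^\alpha}\rfloor$.

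\emph{Step 3: interpolation to all $N$ (the main obstacle).} This is the step I expect to be hardest. Given $N$ with $N_m\le N<N_{m+1}$, write
\[
\Bigl|\sum_{j\le N}\Bigr|\le\Bigl|\sum_{j\le N_m}\Bigr|+(N-N_m),
\]
and use the ratio
\[
\frac{N_{m+1}}{N_m}\approx 2^{(m+1)^\alpha-m^\alpha}=2^{O(m^{\alpha-1})}\to1 .
\]
Since $\alpha<1$, this gives $(N-N_m)/N\le N_{m+1}/N_m-1\to0$, and hence $\frac1N\bigl|\sum_{j\le N}e^{2\pi i r^jhx}\bigr|\to0$. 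This is exactly why $N_m$ was taken subexponential in $m$.

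\emph{Conclusion.} By Weyl's criterion, $(r^jx)_{j\ge1}$ is equidistributed modulo $1$ for every odd $r\ge3$. This is equivalent to $x$ being normal in base $r$ (Wall's theorem), so $x$ is normal in all odd bases.
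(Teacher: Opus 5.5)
Your proposal is correct and follows the paper's proof: deduce $x\notin\Delta$ from Lemma~\ref{lemma:positive}, bound the Weyl sums by $1/\log\log N_m$ along $N_m$, interpolate to all $N$ using $N_{m+1}/N_m\to1$, and conclude by Weyl's criterion. Your Step~1 is more careful than the paper, which only asserts $x\notin\Delta$. The relative openness of each $\Lambda_{r,m}$ in $\C$, which follows from continuity of $f_{h,r,m}$ and the strict inequality, is exactly the missing justification. Your boundary case is automatic: $x\in\overline{I_i}$ together with $|I_i|<\eta$ already gives $I_i\subseteq(x-\eta,x+\eta)$.
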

\begin{proof}
The nested dyadic intervals $I_i$ shrink to a unique point $x\in\C$, that is, $x=\bigcap_{i\geq 1}I_i$.
By Lemma~\ref{lemma:positive}, for every $i\geq 0$, $\mu(I_i\setminus \Delta)>0$,
we conclude that $x\not\in\Delta$.
Take any odd $r\ge3$ and any integer $h\neq0$.
For all sufficiently large $m$, we have $|h|\le \log\log N_m$ and $x\notin\Lambda_{r,m}$, so
\[
\Bigl|\frac1{N_m}\sum_{j=1}^{N_m} e^{2\pi i r^{\,j} h x}\Bigr|
\le \frac{1}{\log\log N_m} \to 0 \quad\text{as } m\to\infty .
\]
Now let $M$ be an arbitrary positive integer.  Choose the unique $m$ such that
\[
N_{m-1} < M \le N_m ,
\]
where we set $N_0 = 1$.
Then we can write
\[
\frac{1}{M}\sum_{j=1}^{M} e^{2\pi i r^{\,j} h x}
= \frac{N_m}{M}\cdot\frac{1}{N_m}\sum_{j=1}^{N_m} e^{2\pi i r^{\,j} h x}
\;-\; \frac{1}{M}\sum_{j=M+1}^{N_m} e^{2\pi i r^{\,j} h x}.
\]
The absolute value of the second  summand is at most 
\[
(N_m-M)/{M}.
\]
Since 
$N_m =\lfloor 2^{m^{\alpha}}\rfloor$ is subexponential in $m$,  we have
 $N_m/N_{m-1} \to 1$.
 Since $M > N_{m-1}$, we have $N_m/M \le N_m/N_{m-1} \to 1$, hence $\frac{N_m-M}{M}~\to~0$.
 Thus, the second summand tends to~$0$.
Now we argue that the first summand also tends to~$0$. Given that  $x\not \in \Lambda_{r,m}$, we have 
$\left|\frac{1}{N_m}\sum_{j=1}^{N_m} e^{2\pi i r^{\,j} h x}\right|\leq 1/(\log\log N_m)$ which tends to $0$ and $m\to\infty$. Since $N_m/M \to 1$, we conclude
\[
\lim_{M\to\infty} \frac{1}{M}\sum_{j=1}^{M} e^{2\pi i r^{\,j} h x} = 0 .
\]
This is Weyl’s criterion for the sequence $(r^{\,j}x )_{j\ge 1}$ to be uniformly distributed modulo~$1$. As a result, $x$ is normal in base~$r$.
\end{proof}

\begin{lemma}
Assume  $\S$ is a computable sparse set  with a computable sparsity exponent $\rho$.
Assume  $\varepsilon $ is a  positive  real number such that $\varepsilon< 2/3$. Then, then the number $x=x(\varepsilon) $ defined by Algorithm ~\ref{alg:1} is computable.
\end{lemma}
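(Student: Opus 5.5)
The strategy is to show that every quantity Algorithm~\ref{alg:1} consults is computable uniformly in its indices. It then follows that each bit $b_j$ is produced after finitely many effective operations. I treat $\varepsilon$ as a rational input. If $\varepsilon$ is a computable real, I would instead replace it by a rational $\varepsilon'\le\varepsilon$ with $\varepsilon'<2/3$; the argument is unchanged.

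\textbf{Bits outside $\S$ and the sequence $(k_i)$.} Since $\S$ is computable, given $j$ we can decide whether $j\in\S$. By searching, we can list $k_1<k_2<\cdots$; each $k_i$ is found in finite time because $\S$ is infinite (it is sparse, so $S(a,a+k)\to\infty$). Hence the bits $b_j=0$ for $j\notin\S$ are computable. The work is in deciding $b_{k_i}$.

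\textbf{The constants.} Next I would check that the constants $K$, $\delta_1,\dots,\delta_4,\delta_7(r),\delta_9(r)$, $C_r$, $C'_r$ and $m_0(r)$ are computable from $r$.
\begin{itemize}
\item $K$: given the computable exponent $\rho$, an admissible $K$ is needed. Only an upper bound matters, since every later lemma holds for all larger thresholds. I would fix such a $K$ once and for all as part of the finite data of the algorithm (a single integer, hence non-uniformity is harmless for computability of $x$).
\item $\delta_7$: it is then found by search, using the monotonicity argument in Lemma~\ref{lem:L2-key}.
\item $m_0(r)$: the tail $\sum_{m\ge M}C'_r(\log m)^3/m^{1.003}$ admits an effective upper bound by Remark~\ref{rem:2}. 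So instead of the least $m_0(r)$, I would use a computable $m_0(r)$ for which this bound is $<\varepsilon/2^r$. Lemma~\ref{lemma:positive} only uses that inequality.
\end{itemize}

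\textbf{The choice of $p_i$ and the comparisons.} By Remarks~\ref{rem:1} and~\ref{rem:3}, each $\mu(\Lambda_{r,m})$ and each $u_t$ is a computable real, uniformly in $r,m,t$. Remark~\ref{rem:3} also gives an explicit upper bound on $u_t$ tending to $0$. So $p_i$ is chosen by searching for $t\ge p_{i-1}$ with this bound $<2^{-2i}\varepsilon$, which halts.

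The set $\Delta_{p_i}\cap I^b_{i-1}$ is a finite union of intervals with algebraic, computable endpoints. Its $\mu$-measure is computable: $\mu$ of an interval $[c,d]$ is the limit of dyadic approximations, with error at most $\mu$ of a dyadic cell, which is $\le 2^{-S(1,n)}\to0$ effectively. Thus both sides of each inequality in Step $i$ are computable reals.

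\textbf{Main obstacle.} Strict inequality between computable reals is only semi-decidable; equality cannot be detected. This is why the algorithm says ``verified first''. I would dovetail the approximations of the two tests for $b=0$ and $b=1$ with precision $2^{-n}$, $n=1,2,\ldots$, and stop as soon as one strict inequality is certified. Termination follows from the proof of Lemma~\ref{lemma:positive}: at least one of the two quantities is strictly below $2^{-i}T_i$. A positive gap is eventually detected at finite precision, and $T_i$ is itself computable from the computable $u_{p_j}$.

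Whichever test succeeds, the invariant (IH) of Lemma~\ref{lemma:positive} holds. The preceding lemma then still yields normality. By induction on $i$, the map $i\mapsto b_{k_i}$ is computable, so $x$ is computable.
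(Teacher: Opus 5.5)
Your proposal is correct and follows the paper's own argument: everything the algorithm consults ($\S$, $\mu$ on intervals, $\mu(\Lambda_{r,m})$ via Remark~\ref{rem:1}, $u_t$ via Remark~\ref{rem:3}) is computable, $p_i$ comes from a halting search, and $b_{k_i}$ is decided by a semi-decidable strict comparison that terminates because the proof of Lemma~\ref{lemma:positive} guarantees one half satisfies it. You are also more careful than the paper on points it passes over: you hard-code the generally non-computable constant $K$ of Lemma~\ref{lem:counting}, replace the ``least'' $m_0(r)$ and ``first'' $p_i$ (which would require deciding non-strict inequalities between computable reals) by choices certified through explicit upper bounds, dovetail the two tests, and note that (IH) holds for whichever test succeeds.
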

\begin{proof}
The sparse set  $\S$ is computable, so there is a a computable enumeration of all its elements in increasing order. It also implies that the measure $\mu$ on dyadic intervals is computable.
By Remark~\ref{rem:1}, for each odd $r\geq 3$ and for each $m$, 
$\mu(\Lambda_{r,m})$ is computable.
Since the value $m_0(r)$ we have, for each $t$, $\mu(\Delta_t$) is is computable.
 By Remark~\ref{rem:3}, $u_t$ is computable. Given that  $u_t\to 0$,
 the algorithm can perform a finite search to find the first $p_i$ such that $u_{p_i}< 2^{-2i}\varepsilon$. 
  Since  $\Delta_{p_i}$ is a finite union of intervals with computable endpoints, so is $\Delta_{p_i}\cap I^0_{i-1}$.  The measure $\mu$ on this set is computable.  The choice of $b_{k_i}$ is the result of a comparison of computable reals.
  The decision whether a given  real number is strictly less than another is  computable unless they are equal.
By the proof of Lemma~\ref{lemma:positive}, at least   
one of  $I^0_{i-1}$ or  $I^1_{i-1}$ satisfies the strict inequality condition.
\end{proof}

\section*{Acknowledgements}
We thank Pablo Shmerkin for his explicit question to the first author  in Buenos Aires in 2014 on how to  construct numbers whose block frequencies converge to non-uniform limits in one base yet are normal in the multiplicatively independent bases. We also thank Benjamin Weiss for a valuable email exchange  on deterministic numbers in one base and normality in multiplicatively independent bases.

There are no grants supporting this work because since December 2023, governmental investment in science has been fully cut and the Argentine scientific system is undergoing a serious contraction.

\bibliographystyle{plain}
\bibliography{bibliography}

\end{document}